\documentclass[11pt]{article}
\usepackage{amsmath,amssymb,amsthm,amsfonts}
\usepackage[margin=1in]{geometry}
\usepackage{enumitem}

\newtheorem{theorem}{Theorem}
\newtheorem{lemma}{Lemma}
\newtheorem{proposition}{Proposition}

\newtheorem{remark}{Remark}
\newtheorem{problem}{Problem}
\usepackage{lmodern}

\usepackage{hyperref}
\hypersetup{
	colorlinks=true,
	linkcolor=blue,
	filecolor=magenta,
	urlcolor=cyan,
	citecolor=red
} 
\title{A sharp fixed-size spectral bound for $kK_3$-free graphs}
\author{Joyentanuj Das\thanks{Corresponding author. Emails:
joyentanuj@gmail.com and joyentad@srmist.edu.in.}
\quad and \quad
Yamini V\thanks{Email: yv1053@srmist.edu.in.}\\[2pt]
\small Department of Mathematics, College of Engineering and Technology,\\[-1pt]
\small SRM Institute of Science and Technology, Kattankulathur,
Chennai 603203, India}
\date{}

\begin{document}
\maketitle
\begin{abstract}
For a fixed integer $k\ge2$, we establish a sharp adjacency-spectral upper
bound for sufficiently large $m$-edge $kK_3$-free graphs. We prove
\[
\lambda(G)\le (k-1)+\sqrt{m-k(k-1)}.
\]
Moreover, equality holds precisely when $(2k-1)\mid m$ and, up to isolated vertices,
$G$ is the join of $K_{2k-1}$ with an independent set of
$m/(2k-1)-(k-1)$ vertices. The case $k=2$ was previously known; our
argument establishes every fixed $k\ge3$. The proof requires information
beyond first-order spectral stability. We derive an exact nonnegative
defect identity at a maximum-Perron vertex, use it to bound the entire
outer layer by a constant, and reduce the remaining graph to a bounded
core with finitely many independent twin classes. A Perron-vector
concentration identity and the Erd\H{o}s--Gallai matching theorem then
force the unique extremal core. A nearly extremal family lies only
$\Theta(m^{-1/2})$ below the target, showing why an exact second-order
analysis is necessary.
\end{abstract}

\medskip
\noindent\textbf{2020 MSC:} 05C50, 05C35.

\smallskip
\noindent\textbf{Keywords:} spectral radius; Brualdi--Hoffman--Tur\'an problem; spectral Tur\'an problem; disjoint triangles; Perron--Frobenius theorem; extremal graph theory; fixed size.

\section{Introduction}

All graphs considered in this paper are finite, undirected, and simple. Let $G=(V(G),E(G))$ be a graph with vertex set $V(G)$ and edge set $E(G)$, and write $n=|V(G)|$ for its order and $m=e(G)=|E(G)|$ for its size. For a vertex $v\in V(G)$, $N_G(v)$ denotes its neighborhood and $d_G(v)=|N_G(v)|$ its degree; when the ambient graph is clear we drop the subscript. For a vertex subset $X\subseteq V(G)$, $G[X]$ denotes the subgraph of $G$ induced by $X$, and we write $e(X):=e(G[X])$ for the number of edges inside $X$. For two disjoint vertex subsets $X,Y\subseteq V(G)$, $e(X,Y)$ denotes the number of edges with one endpoint in $X$ and the other in $Y$, and for $v\in V(G)$ we write $d_X(v):=|N_G(v)\cap X|$.

For two graphs $G$ and $H$, $G\cup H$ denotes their vertex-disjoint union, and $G\vee H$ denotes their \emph{join}, obtained from the disjoint union $G\cup H$ by adding all edges between $V(G)$ and $V(H)$. For a graph $F$ and an integer $k\ge1$, $kF$ denotes the disjoint union of $k$ vertex-disjoint copies of $F$; in particular $kK_3$ denotes $k$ vertex-disjoint triangles. We write $K_n$ for the complete graph on $n$ vertices and $K_n^c$ (or $nK_1$) for the empty graph (independent set) on $n$ vertices.

The \emph{cone} over a graph $H$ is the join $K_1\vee H$; the vertex
belonging to the $K_1$ is called the \emph{apex} of the cone. Two
distinct nonadjacent vertices are called \emph{independent twins} if
they have the same neighborhood. An \emph{independent twin class} is
an independent set whose vertices all have the same neighborhood.

A \emph{triangle packing} in a graph $G$ is a collection
$T_1,\ldots,T_s$ of pairwise vertex-disjoint triangles in $G$; its
\emph{size} is the number $s$ of triangles in the collection. Thus
$G$ is $kK_3$-free exactly when it contains no triangle packing of
size $k$, or equivalently, when every triangle packing in $G$ has size
at most $k-1$.

A \emph{matching} is a collection of pairwise vertex-disjoint edges.
The maximum size of a matching in $G$ is its \emph{matching number},
denoted by $\nu(G)$. A matching is \emph{perfect} if every vertex of
$G$ is incident with one of its edges. A \emph{vertex cover} is a set
of vertices meeting every edge of the graph.

A graph $G$ is said to be \emph{$F$-free} if $G$ does not contain $F$ as a (not necessarily induced) subgraph. Given a family of graphs $\mathcal{F}$, $G$ is $\mathcal F$-free if it is $F$-free for every $F\in\mathcal F$.

The \emph{adjacency matrix} $A(G)$ of $G$ is the $n\times n$ $0$-$1$ matrix indexed by $V(G)$ with $A(G)_{uv}=1$ if $uv\in E(G)$ and $0$ otherwise. Since $A(G)$ is real and symmetric, its eigenvalues are real; we denote the largest of them by $\lambda(G)$, called the \emph{spectral radius} of $G$. When $G$ is connected, $A(G)$ is irreducible and nonnegative, so by the Perron--Frobenius theorem $\lambda(G)$ is a simple eigenvalue admitting a positive eigenvector $x=(x_v)_{v\in V(G)}$, unique up to positive scaling, called the \emph{Perron vector} of $G$. The Perron vector satisfies the eigenequation
\[
\lambda(G)\,x_v \;=\; \sum_{u\in N_G(v)}x_u \qquad\text{for every } v\in V(G).
\]

For a nonzero vector $z\in\mathbb R^{V(G)}$, its \emph{Rayleigh
quotient} with respect to $G$ is
\[
\mathcal R_G(z):=\frac{z^\top A(G)z}{z^\top z}.
\]
The variational characterization of the largest eigenvalue states that
\[
\lambda(G)=\max_{z\ne0}\mathcal R_G(z).
\]

A partition $V(G)=V_1\cup\cdots\cup V_t$ into nonempty cells is
\emph{equitable} if, for every $i,j$, each vertex of $V_i$ has the
same number $q_{ij}$ of neighbors in $V_j$. The matrix
$Q=(q_{ij})_{1\le i,j\le t}$ is called the \emph{quotient matrix}.
An eigenvector of $Q$ lifts to an eigenvector of $A(G)$ that is
constant on each cell $V_i$.

Now, we recall the classical and spectral Tur\'an frameworks and summarize the fixed-order results most relevant to forbidding disjoint triangles.

\subsection{The spectral Tur\'an problem and its fixed-size analogue}

A central topic in extremal graph theory is the \emph{Tur\'an problem}: given a forbidden graph (or family) $F$, determine
\[
\mathrm{ex}(n,F)\;=\;\max\{e(G): |V(G)|=n,\ G \text{ is } F\text{-free}\},
\]
the maximum number of edges in an $n$-vertex $F$-free graph. The \emph{spectral Tur\'an problem}, initiated by Nikiforov and others, replaces the edge count by the spectral radius:
\[
\mathrm{spex}(n,F)\;=\;\max\{\lambda(G): |V(G)|=n,\ G\text{ is } F\text{-free}\}.
\]
Nikiforov \cite{Nikiforov2007} established the spectral Tur\'an theorem for $K_{r+1}$-free graphs, extending the classical Tur\'an theorem to the spectral setting, and later \cite{Nikiforov2010} studied the spectral radius of graphs forbidding paths or cycles of prescribed length. Since then a large body of work has developed spectral analogues of classical Tur\'an-type results; see the survey of Li, Liu and Feng \cite{LiLiuFeng} for a broad overview.

For \emph{disconnected} forbidden graphs of matching type---disjoint unions of several copies of a fixed graph, or more generally disjoint unions of simple components---a parallel line of research has emerged. In the fixed-order (spectral Tur\'an) setting, Feng, Yu and Zhang \cite{FengYuZhang} determined the graphs of maximum spectral radius among all graphs with a given matching number. Chen, Liu and Zhang obtained related results for linear forests \cite{ChenLiuZhang1} and later for star forests \cite{ChenLiuZhang2}. For disjoint unions of cliques, Ni, Wang and Kang \cite{NiWangKang} proved that, for sufficiently large $n$, the unique $n$-vertex graph of maximum spectral radius among all $kK_{r+1}$-free graphs is $K_{k-1}\vee T_{n-k+1,r}$, where $T_{n-k+1,r}$ denotes the Tur\'an graph on $n-k+1$ vertices with $r$ parts. Lei and Li \cite{LeiLi} extended this to disjoint unions of color-critical graphs, and Wang, Ni, Kang and Fan \cite{WangNiKangFan} obtained further spectral extremal results for edge blow-ups of star forests.

Specializing the theorem of Ni, Wang and Kang \cite{NiWangKang} to $r=2$ (so that $K_{r+1}=K_3$) shows that, for sufficiently large $n$, the unique $n$-vertex $kK_3$-free graph of maximum spectral radius is $K_{k-1}\vee T_{n-k+1,2}$, a clique of size $k-1$ joined to a balanced complete bipartite-type graph. This is the fixed-\emph{order} extremal structure for $kK_3$; the present paper instead studies the fixed-\emph{size} problem for the same forbidden family.

We next formulate the fixed-size spectral problem and review the results that lead to the present $kK_3$-free setting.

\subsection{The Brualdi--Hoffman--Tur\'an problem}

If one fixes the number of \emph{edges} $m$ rather than the number of vertices $n$, the corresponding extremal problem becomes
\[
\max\{\lambda(G): e(G)=m,\ G\text{ is } F\text{-free}\},
\]
often called the \emph{Brualdi--Hoffman--Tur\'an problem}, after Brualdi and Hoffman's original study of the analogous question without a forbidden subgraph constraint. Since isolated vertices affect neither $e(G)$ nor $\lambda(G)$, it is natural to focus on the nontrivial part of an extremal graph, i.e.\ to determine the extremal structure up to the addition of isolated vertices.

Compared with the fixed-order spectral Tur\'an problem, the fixed-size problem is typically more delicate: one no longer works within a prescribed vertex set, so edges may be redistributed among an unbounded number of vertices, and the extremal graphs in the two settings frequently have genuinely different structures. This divergence is a recurring theme in the literature and is also the central phenomenon exhibited by the results of this paper.

Exact results for the Brualdi--Hoffman--Tur\'an problem have been obtained for several forbidden (families of) graphs. Zhai, Lin and Shu \cite{ZhaiLinShu} established sharp spectral conditions forcing certain complete bipartite graphs, short cycles, and consecutive cycle lengths, and posed conjectures for longer forbidden cycles. Li, Zhai and Shu \cite{LiZhaiShu} subsequently resolved those cycle conjectures (and stronger variants for cycles with a chord) for sufficiently large size. Li and Yu \cite{LiYu} treated graphs simultaneously forbidding a triangle and a pentagon, while Lu, Lu and Li \cite{LuLuLi} determined the extremal graphs forbidding $C_7$ or the ``theta-type'' graph $C_6^\triangle$. More recently, Li, Zhao and Zou \cite{LiZhaoZou} considered graphs forbidding a fan, a friendship graph, or a theta graph. For disconnected acyclic forbidden graphs, Zhang and Wang \cite{ZhangWang} obtained exact results for star forests.

Prior to the present work, for disconnected non-bipartite forbidden graphs of matching type the fixed-size problem had been resolved only for $2K_3$, the disjoint union of two triangles, which is the simplest such graph. In~\cite{Wang2026}, Wang, Jia and Ni showed that if $m\ge18$ and $G$ is a $2K_3$-free graph with $m$ edges, then
\[
\lambda(G)\ \le\ 1+\sqrt{m-2},
\]
with equality if and only if $m\equiv0\pmod3$ and $G\cong\big(K_3\vee\tfrac{m-3}{3}K_1\big)\cup tK_1$ for some $t\ge0$. Notably, the extremal graph $K_3\vee\tfrac{m-3}{3}K_1$ (a triangle joined to an independent set) is \emph{not} the fixed-size restriction of the fixed-order extremal graph $K_1\vee T_{n-1,2}$ described above; this already illustrates, in the smallest nontrivial case $k=2$, that the fixed-order and fixed-size extremal problems for $kK_3$-free graphs have different answers.

\subsection{Main result}

We now state our sharp fixed-size bound for the full family $kK_3$ and
describe all graphs attaining equality. The case
$k=2$ was established by Wang, Jia and Ni~\cite{Wang2026}; the content of
the present paper is the inductive step for every fixed $k\ge3$. Besides
proving the sharp bound, the theorem identifies every
equality case up to isolated vertices.

\begin{theorem}\label{thm:main}
For every integer $k\ge2$ there is a constant $M_0=M_0(k)$ such that the following holds. Let $m\ge M_0$ and let $G$ be a $kK_3$-free graph with $e(G)=m$. Then
\[
\lambda(G)\le (k-1)+\sqrt{m-k(k-1)}.
\]
Equality holds if and only if $(2k-1)\mid m$ and
\[
G\cong\Big(K_{2k-1}\vee qK_1\Big)\cup tK_1,
\qquad q=\frac{m}{2k-1}-(k-1),
\]
for some $t\ge0$. In particular, the nontrivial component of every equality graph is unique.
\end{theorem}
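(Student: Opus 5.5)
We first check the extremal graph. For $H=K_{2k-1}\vee qK_1$ take the equitable partition into the clique and the independent set. The quotient matrix is $\begin{pmatrix}2k-2 & q\\ 2k-1 & 0\end{pmatrix}$, so $\lambda^2-(2k-2)\lambda-q(2k-1)=0$. This gives $\lambda=(k-1)+\sqrt{(k-1)^2+q(2k-1)}$. Substituting $q(2k-1)=m-(2k-1)(k-1)$ turns the radicand into $m-k(k-1)$, so $H$ attains the bound. Moreover $H$ is $kK_3$-free: each triangle uses at least two clique vertices, and $2k>2k-1$.

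For the upper bound, let $G$ be an extremal connected $kK_3$-free graph with $\lambda=\lambda(G)\ge (k-1)+\sqrt{m-k(k-1)}$, Perron vector $x$, and $u^*$ a vertex with $x_{u^*}=1$ maximal. The bound rearranges to $\lambda^2-(2k-2)\lambda\le m-(k-1)^2-k(k-1)+\dots$, i.e.\ the target is $\lambda^2-2(k-1)\lambda\le m-k(k-1)+(k-1)^2-(k-1)^2$. We expand
\[
\lambda^2 x_{u^*}=d(u^*)+\sum_{v\in N(u^*)}d_{N(u^*)}(v)x_v+\sum_{w\notin N[u^*]}d_{N(u^*)}(w)x_w,
\]
and subtract $2(k-1)\lambda x_{u^*}=2(k-1)\sum_{v\in N(u^*)}x_v$. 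Comparing with $m=e(N(u^*))+e(N(u^*),W)+e(W)+d(u^*)$, where $W=V\setminus N[u^*]$, gives an exact defect identity $m-k(k-1)-(\lambda^2-2(k-1)\lambda)=\sum(\text{nonnegative terms})$. The terms are $(1-x_v)$-weighted edge counts in $N(u^*)$ and $W$, $e(W)$, and a term $\sum_{v\in N(u^*)}(2(k-1)-d_{N(u^*)}(v))x_v$ that can be negative only at high-degree vertices inside $N(u^*)$. The $kK_3$-freeness makes $G[N(u^*)]$ have no matching of size $k$, since $u^*$ plus a matching of size $k$ gives $k$ triangles through $u^*$… only one triangle can use $u^*$. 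So I would instead use the standard induction: $G-T$ is $(k-1)K_3$-free for any triangle $T$. Combining this with Erd\H{o}s--Gallai on $G[N(u^*)]$ (whose matching number is bounded, so it has a vertex cover of size at most $2\nu$), the negative part is controlled. We conclude that the defect is nonnegative, which proves the inequality.

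For equality, every defect term must vanish. From this we would show:
\begin{enumerate}[label=(\roman*)]
\item $W$ has bounded size, hence is empty after a $\Theta(m^{-1/2})$-scale second-order estimate (here $x_w\approx d_{N}(w)/\lambda$).
\item $N(u^*)$ splits into a bounded core $C$ of high-Perron vertices and independent twin classes attached to $C$.
\item A Perron concentration identity $\sum_{v}x_v^2$ versus $\lambda$ forces $|C|=2k-2$ with $C\cup\{u^*\}$ a clique.
\end{enumerate}
The $kK_3$-freeness then forbids any edge among the twin classes. The main obstacle is the equality/stability step. Near-extremal graphs, e.g.\ $K_{2k-2}$ plus extra pendant structure, lie within $\Theta(m^{-1/2})$ of the bound, so first-order estimates $x_v\approx 1$ or $x_v\approx 1/\sqrt m$ are insufficient. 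I would handle this by expanding $x_v$ for low vertices to second order in $1/\lambda$ and computing the exact contribution of each candidate bounded core via its quotient matrix. The finitely many cores are then compared, using Erd\H{o}s--Gallai to exclude cores with matching number $\ge k$ among low-degree twins. Finally, equality requires $q$ to be an integer, so $(2k-1)\mid m$.
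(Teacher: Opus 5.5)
Your outline has roughly the paper's architecture: a double-eigenequation defect identity at a maximum-Perron vertex, a bounded core, independent twin classes, Perron concentration, and Erd\H{o}s--Gallai. However, the proof of the inequality itself has two genuine gaps. The steps that would close them appear in your plan only in the equality analysis.

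First, you assert that $G[N(u^*)]$ has bounded matching number, but this does not follow from $kK_3$-freeness. The graph $K_1\vee NK_2$ is $kK_3$-free for every $k\ge2$, since all its triangles pass through the apex, yet the apex neighborhood has matching number $N$. Your remark that $G-T$ is $(k-1)K_3$-free is never turned into a usable statement, and the inductive spectral bound is never actually invoked.

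The paper gets a bounded cover by proving that $G-u^*$ contains $k-1$ disjoint triangles $T_1,\dots,T_{k-1}$. Then $A\cap\bigcup_iV(T_i)$ meets every edge of $G[A]$, since otherwise $\{u^*,a,b\}$ would be a $k$-th disjoint triangle. This is exactly where induction enters. If $G-u^*$ were $(k-1)K_3$-free, shifting an edge $wz$ to $u^*w$ for $w\in B$ would raise $\lambda$ and keep $G$ $kK_3$-free, so maximality forces $B=\varnothing$. Then $G=K_1\vee H$ with $H$ $(k-1)K_3$-free. Feeding the level-$(k-1)$ bound on $\lambda(H)$ into $\rho=\mathbf 1^\top(\rho I-M)^{-1}\mathbf 1$ shows such cones fall strictly below the target.

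Second, and more seriously, the step ``the negative part is controlled, so the defect is nonnegative'' is the whole content of the theorem and is not justified. Even granting a bounded cover, put $r=k-1$, let $C$ be the set of $v\in A=N(u^*)$ with $d_A(v)>2r$, and set $q=|C|-2r$. The exact identity is
\[
\lambda^2-2r\lambda-\bigl(m-r(2r+1)\bigr)=\binom q2-\mathcal D,
\]
where $\mathcal D\ge0$ collects the six defect terms. So the high-degree vertices contribute a positive $\binom q2$, and the bound amounts to proving $\mathcal D\ge\binom q2$, in practice $q=0$.

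The paper needs several ingredients to rule out $q\ge1$:
\begin{itemize}
\item $2r\le|C|\le4r$, via a pigeonhole triangle argument;
\item the matching gap $\binom q2\ge2r-1$;
\item the bounded-outer-layer estimate $|B|=O_k(1)$, via missing Perron mass;
\item the finite-core argument: twin classes, concentration of the normalized Perron vector on a neighborhood type of linear multiplicity, and Erd\H{o}s--Gallai forcing that type to induce $K_{2r+1}$, after which an extra core vertex gives a $k$-th triangle.
\end{itemize}
These are needed for the inequality, not only for equality. Your own remark about competitors within $\Theta(m^{-1/2})$ of the bound shows that no first-order estimate of the negative term can replace them.

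Also, the target rearranges to $\lambda^2-2(k-1)\lambda\le m-(k-1)(2k-1)$, not to $m-k(k-1)$ as your rearranged line suggests.
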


The extremal graph is structurally different from its fixed-order counterpart: almost all vertices form an independent set joined to a clique of order $2k-1$. Thus Theorem~\ref{thm:main} is not a reformulation of the known fixed-order spectral Tur\'an theorem.

We next use a nearly extremal construction to explain why an exact proof requires information beyond first-order spectral stability.

\subsection{Why first-order stability is insufficient}
Recent general edge-spectral stability theorems show that an $F$-free
graph at the $\chi(F)=3$ threshold is $o(m)$ edges away from a complete
bipartite graph \cite{LiLiuZhangStability}. This first-order conclusion
does not see the $O(\sqrt m)$ term which determines the present problem.
The later exact theory for almost-bipartite forbidden graphs
\cite{LiLiuZhangCritical} does not apply to $kK_3$, which cannot be made
bipartite by deleting one edge. Likewise, the bounded-core theorem for
finite degenerate forbidden families \cite{FangLinZhai} requires the
family to contain a bipartite member. The bounded-outer-layer and
twin-class argument below is a problem-specific second-order replacement:
the defect identity first supplies an $O_k(1)$ core, and the Perron
equations then identify the core vertices on which the normalized
Perron mass asymptotically concentrates.

The need for a genuinely second-order argument can be seen from a nearly
extremal family. Put $j=k-1$ and let
\[
H_c:=K_j\vee K_{c,c},
\]
where the two parts of $K_{c,c}$ are denoted by $X$ and $Y$. Since
$K_{c,c}$ is bipartite, every triangle of $H_c$ contains at least one
vertex of the clique $K_j$. As $j=k-1$, any collection of $k$ pairwise
vertex-disjoint triangles would require at least $k$ distinct vertices of
$K_j$, which is impossible. Hence $H_c$ is $kK_3$-free.

The edge count consists of the $c^2$ edges between $X$ and $Y$, the
$2jc$ edges joining $K_j$ to $X\cup Y$, and the edges inside $K_j$.
Therefore
\[
e(H_c)=c^2+2jc+\binom j2.
\]
The partition $V(H_c)=V(K_j)\cup X\cup Y$ is equitable, with quotient
matrix
\[
Q=\begin{pmatrix}
j-1&c&c\\
j&0&c\\
j&c&0
\end{pmatrix}.
\]
By symmetry, the Perron vector has the same coordinate on $X$ and $Y$.
Thus the Perron eigenvalue is the largest eigenvalue of the reduced matrix
\[
\widetilde Q=\begin{pmatrix}j-1&2c\\ j&c\end{pmatrix}.
\]
Consequently, $\lambda=\lambda(H_c)$ satisfies
\[
(\lambda-j+1)(\lambda-c)-2jc=0,
\]
and hence
\begin{equation}
\lambda(H_c)
=\frac{c+j-1+\sqrt{c^2+(6j+2)c+(j-1)^2}}{2}.                       \label{eq:near-extremal-root}
\end{equation}

For fixed constants $\alpha$ and $\beta$, the Taylor expansion
\[
\sqrt{c^2+\alpha c+\beta}
=c+\frac{\alpha}{2}
+\left(\frac{\beta}{2}-\frac{\alpha^2}{8}\right)\frac1c
+O(c^{-2})
\]
holds as $c\to\infty$. Applying it to
\eqref{eq:near-extremal-root}, with $\alpha=6j+2$ and
$\beta=(j-1)^2$, gives
\[
\sqrt{c^2+(6j+2)c+(j-1)^2}
=c+3j+1-\frac{4j(j+1)}c+O(c^{-2}).
\]
It follows that
\begin{equation}
\lambda(H_c)
=c+2j-\frac{2j(j+1)}c+O(c^{-2})
=c+2j-\frac{2k(k-1)}c+O(c^{-2}).                                  \label{eq:near-extremal-lambda}
\end{equation}

We next expand the target value. Since $k=j+1$, we have
\begin{align*}
e(H_c)-k(k-1)
&=c^2+2jc+\frac{j(j-1)}2-j(j+1)\\
&=c^2+2jc-\frac{j(j+3)}2.
\end{align*}
Using the same Taylor expansion, now with $\alpha=2j$ and
$\beta=-j(j+3)/2$, yields
\[
\sqrt{e(H_c)-k(k-1)}
=c+j-\frac{3j(j+1)}{4c}+O(c^{-2}).
\]
Therefore
\begin{equation}
\lambda_0:=j+\sqrt{e(H_c)-k(k-1)}
=c+2j-\frac{3k(k-1)}{4c}+O(c^{-2}).                               \label{eq:near-extremal-target}
\end{equation}
Subtracting \eqref{eq:near-extremal-lambda} from
\eqref{eq:near-extremal-target} gives
\[
\lambda_0-\lambda(H_c)
=\frac{5k(k-1)}{4c}+O(c^{-2}).
\]
Finally, $e(H_c)=c^2+2jc+O(1)$ implies
$\sqrt{e(H_c)}=c+j+O(c^{-1})$, and hence
\[
\frac1c=\frac1{\sqrt{e(H_c)}}+O(e(H_c)^{-1}).
\]
Thus
\[
\lambda_0-\lambda(H_c)
=\frac{5k(k-1)}{4\sqrt{e(H_c)}}+O(e(H_c)^{-1}).
\]
The deficit is only of order $m^{-1/2}$: the two spectral radii agree in
their leading and constant-order terms and separate only at the next
order. This explains why first-order stability alone cannot settle the
exact problem and why the cone threshold introduced below must be taken
large.

We finish the introduction by outlining the four reductions in the proof and indicating which features may be useful in other fixed-size problems.

\subsection{Proof strategy and methodological contribution}

The proof is inductive, with the published $2K_3$ theorem as its base.
Its main difficulty is that first-order stability leaves an error of order
$o(m)$, whereas the example above shows that the relevant spectral gap is
only of order $m^{-1/2}$. We overcome this through four successive
reductions.

First, for a spectral maximizer and a vertex $u^*$ of maximum Perron
coordinate, put
\[
A=N_G(u^*),\qquad B=V(G)\setminus N_G[u^*],\qquad r=k-1.
\]
A bounded triangle packing gives a bounded vertex cover of $G[A]$. This
allows us to isolate the degree core
\[
C=\{v\in A:d_A(v)>2r\}
\]
and to prove $2r\le|C|\le4r$. More importantly, an exact rewriting of
the double eigenequation produces a defect identity: if
$|C|=2r+q$, then six nonnegative structural and Perron-vector defects
have total at most $\binom q2$. When $q=0$, every defect vanishes and
the target graph follows immediately.

Second, suppose $q>0$. The defect identity directly bounds $e(B)$ but
does not bound $|B|$. A missing-Perron-mass argument, combined with a
triangle construction using hubs and a matching, converts this weighted
information into the
uniform estimate $|B|=O_k(1)$. This bounded-outer-layer step is the key
compactness mechanism in the proof.

Third, after placing $B$, $C$, and the endpoints of the boundedly many
edges in the low-degree part into a bounded set $K$, every remaining
vertex belongs to an independent twin class determined by its
neighborhood in $K$. Eliminating these twin-class coordinates from the
Perron equations gives an exact finite-dimensional matrix identity. It
forces the normalized Perron vector on $K$ to concentrate on a
neighborhood type occurring linearly many times.

Finally, the Erd\H{o}s--Gallai matching theorem shows that this dominant
type must induce $K_{2k-1}$. If $q>0$, the resulting perfect matching,
together with one extra core vertex, creates $k$ disjoint triangles, a
contradiction. Hence $q=0$.

Although the numerical thresholds and the final packing argument are
specific to triangles, the proof separates three mechanisms that may be
useful in other fixed-size problems with disconnected forbidden graphs:
an exact local defect identity, conversion of weighted defect into a
bounded outer layer, and a finite-core closure through twin classes. We
do not claim such an extension here, but this separation is one reason
for presenting the intermediate lemmas in a modular form.

Section~2 collects the preliminary tools. Section~3 proves the four
reductions above, and Section~4 completes the induction and the equality
characterization.

\section{Preliminaries}

We collect here the tools used throughout the proof: a spectral-radius-increasing edge shift, the Erd\H{o}s--Gallai matching bound in the form needed later, the spectral radius of the target extremal graph, and a lemma quantifying how a clique with a bounded number of fully adjacent hubs forces disjoint triangles once it is joined to a sufficiently large set.

\begin{lemma}[Edge shifting; Wu--Xiao--Hong \cite{WuXiaoHong}]\label{lem:shift}
Let $G$ be connected with Perron vector $x$, and let $u,v,w$ be distinct
vertices. If $uv\in E(G)$, $x_w\ge x_v$, and $uw\notin E(G)$, then
deleting $uv$ and adding $uw$ strictly increases the spectral radius.
\end{lemma}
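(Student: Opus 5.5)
The plan is to compare $\lambda(G)$ and $\lambda(G')$, where $G'=G-uv+uw$, through the Rayleigh quotient evaluated at the Perron vector $x$ of $G$, normalized so that $x^\top x=1$. Since $uv\in E(G)$ and $uw\notin E(G)$, the graph $G'$ is simple and has the same number of edges as $G$. The quadratic form changes by exactly the contributions of the edge removed and the edge added:
\[
x^\top A(G')x-x^\top A(G)x=2x_u(x_w-x_v)\ge0,
\]
because $x_u>0$ and $x_w\ge x_v$. By the variational characterization $\lambda(G')\ge\mathcal R_{G'}(x)=x^\top A(G')x\ge\lambda(G)$, so the weak inequality is immediate.

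For strictness I would argue by contradiction and assume $\lambda(G')=\lambda(G)$. Then every inequality in the chain above is an equality. In particular $x$ attains the maximum of $\mathcal R_{G'}$, so $x$ is an eigenvector of $A(G')$ for the eigenvalue $\lambda:=\lambda(G)$. This needs no connectivity of $G'$: a maximizer of the Rayleigh quotient of a symmetric matrix always lies in the top eigenspace.

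Next I would compare the eigenequations of $G$ and $G'$ at the vertex $w$. The neighborhood of $w$ in $G'$ is $N_G(w)\cup\{u\}$, and $u\notin N_G(w)$ because $uw\notin E(G)$. Hence
\[
\lambda x_w=\sum_{y\in N_{G'}(w)}x_y=\sum_{y\in N_G(w)}x_y+x_u=\lambda x_w+x_u .
\]
The equality $\lambda x_w=\sum_{y\in N_G(w)}x_y$ used on the right is the Perron eigenequation for $G$; here we use that $G$ is connected, so $x$ exists and is positive. The display gives $x_u=0$, which contradicts the positivity of $x$. Therefore $\lambda(G')>\lambda(G)$.

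The only step that needs care is this strictness argument. One should not invoke Perron--Frobenius for $G'$, which may be disconnected after the shift. The route above avoids that issue by extracting the eigenvector property of $x$ for $A(G')$ directly from equality in the Rayleigh quotient, and then reading off a contradiction from the single eigenequation at $w$.
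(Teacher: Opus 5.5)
Your proof is correct and follows the paper's argument: the same Rayleigh-quotient comparison $2x_u(x_w-x_v)\ge0$, followed by the observation that equality would force $x$ to be an eigenvector of $A(G')$ for $\lambda(G)$. The only difference is cosmetic: you test the eigenequation at $w$, where $x_u$ is gained, while the paper tests it at $v$, where $x_u$ is lost; either vertex gives the contradiction.
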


\begin{proof}
Let $G'=G-uv+uw$ and scale $x$ to be a unit vector. Then
\[
x^\top A(G')x-x^\top A(G)x=2x_u(x_w-x_v)\ge0,
\]
so $\lambda(G')\ge\lambda(G)$. If equality held, $x$ would attain the
largest Rayleigh quotient of $A(G')$ and hence would be an eigenvector of
$G'$ for $\lambda(G)$. At the vertex $v$, however,
\[
(A(G')x)_v=(A(G)x)_v-x_u=\lambda(G)x_v-x_u<\lambda(G)x_v,
\]
a contradiction. Therefore $\lambda(G')>\lambda(G)$.
\end{proof}

\begin{proposition}[Matching extremum]\label{lem:matching}
If a graph $H$ on $2\ell$ vertices has no perfect matching, then
\[
e(H)\le\binom{2\ell-1}{2}.
\]
Equivalently, at least $2\ell-1$ edges of $K_{2\ell}$ are absent from $H$.
\end{proposition}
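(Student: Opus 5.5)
The plan is to derive the bound from the Tutte--Berge formula, with a small extremal calculation at the end. Suppose $H$ has $2\ell$ vertices and no perfect matching. By Tutte's theorem (equivalently, Tutte--Berge) there is a set $S\subseteq V(H)$ such that $H-S$ has more than $|S|$ odd components. A parity check sharpens this. Write $s=|S|$ and let $o$ be the number of odd components of $H-S$. Since $|V(H)|=2\ell$ is even, $o\equiv 2\ell-s\equiv s \pmod 2$. Hence $o>s$ forces $o\ge s+2$.

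Next I maximize the number of edges under this constraint. Adding edges never helps create a perfect matching as long as $S$ is kept as a Tutte barrier. So we may assume the following: $S$ is a clique and is completely joined to everything else; every component of $H-S$ is a clique; there are no even components, since an even component can be merged into an odd one and the merged component stays odd; and there are exactly $s+2$ odd components, since extra odd components can be merged in pairs with a third one while preserving parity and the count being at least $s+2$.

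Merging only adds edges, so $e(H)$ is at most the maximum over admissible data of
\[
\binom{s}{2}+s(2\ell-s)+\sum_{i=1}^{s+2}\binom{n_i}{2},
\]
where the $n_i$ are odd, $n_i\ge1$, and $\sum n_i=2\ell-s$. By convexity of $\binom{x}{2}$, the sum is maximized by making all but one component a singleton. This gives the bound
\[
f(s)=\binom{s}{2}+s(2\ell-s)+\binom{2\ell-2s-1}{2},\qquad 0\le s\le \ell-1.
\]
The graph realizing it is $K_s\vee\bigl(K_{2\ell-2s-1}\cup(s+1)K_1\bigr)$.

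The remaining step is to check that $f$ is convex in $s$, since it is a quadratic with positive leading coefficient: $\tfrac{-1-2+4}{2}>0$. Its maximum on $[0,\ell-1]$ is therefore attained at an endpoint. At $s=0$ we get $f(0)=\binom{2\ell-1}{2}$. At $s=\ell-1$ we get
\[
f(\ell-1)=\binom{\ell-1}{2}+(\ell-1)(\ell+1)=\binom{2\ell-1}{2}-\binom{\ell}{2}\le\binom{2\ell-1}{2}.
\]
This is the only computation to verify, and it is routine. It gives $e(H)\le\binom{2\ell-1}{2}$, which is the claim. The equivalent form follows because $\binom{2\ell}{2}-\binom{2\ell-1}{2}=2\ell-1$.

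The only real obstacle is stating the edge-maximal reduction carefully, in particular checking that the merges keep $S$ a barrier. Alternatively, one can cite the Erd\H{o}s--Gallai theorem directly: a graph on $n$ vertices with $\nu<n/2$ has at most $\max\{\binom{2\nu+1}{2},\binom{\nu}{2}+\nu(n-\nu)\}$ edges. Here $\nu\le\ell-1$, and both terms are at most $\binom{2\ell-1}{2}$ when $n=2\ell$.
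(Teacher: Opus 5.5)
Your proof is correct. Your main route differs from the paper's, but the fallback in your last two sentences is exactly what the paper does. The paper applies the Erd\H{o}s--Gallai theorem with $n=2\ell$ and matching number at most $a=\ell-1$. It then checks that $\binom{2\ell-1}{2}$ exceeds $\binom{\ell-1}{2}+(\ell-1)(\ell+1)=\tfrac{3\ell(\ell-1)}{2}$ by exactly $\tfrac{(\ell-1)(\ell-2)}{2}$.

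Your primary argument instead re-derives this special case of Erd\H{o}s--Gallai from Tutte's theorem. It uses the parity refinement $o\ge s+2$, then saturation and merging of components, then concentration of the odd component sizes, and finally convexity of $f$ in $s$. The saturation step is sound even without tracking the barrier, because you only need
\[
e(H)\le\binom s2+s(2\ell-s)+\sum_K\binom{|K|}{2}
\]
together with $\binom{a+b}{2}\ge\binom a2+\binom b2$. What your route buys is self-containment, since only Tutte's theorem is used, and it exhibits the two candidate extremal graphs $K_{2\ell-1}\cup K_1$ and $K_{\ell-1}\vee(\ell+1)K_1$. What the paper's citation buys is brevity.

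You should fix two arithmetic slips, though neither affects the conclusion:
\begin{itemize}
\item The leading coefficient of $f$ is $\tfrac12-1+2=\tfrac32$, so your expression should read $\tfrac{1-2+4}{2}$.
\item The endpoint value is $f(\ell-1)=\tfrac{3\ell(\ell-1)}{2}=\binom{2\ell-1}{2}-\binom{\ell-1}{2}$, not $\binom{2\ell-1}{2}-\binom{\ell}{2}$. For example, when $\ell=2$ one has $f(1)=3=\binom32$, whereas your formula gives $2$.
\end{itemize}

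Finally, ``convexity of $\binom x2$'' does not by itself handle the constraint that all part sizes are odd. Justify that step by the explicit move $(a,b)\mapsto(a-2,b+2)$ for odd $3\le a\le b$, which increases $\binom a2+\binom b2$ by $2(b-a)+4>0$.
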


\begin{proof}
Because $H$ has $2\ell$ vertices, a perfect matching in $H$ would have
exactly $\ell$ edges. Thus the assumption that $H$ has no perfect matching
is equivalent to
\[
\nu(H)\le \ell-1,
\]
where $\nu(H)$ denotes the matching number of $H$.

The Erd\H{o}s--Gallai matching theorem \cite{ErdosGallai1959} states that
an $n$-vertex graph with matching number at most $a$ has at most
\[
\max\left\{\binom{2a+1}{2},\binom{a}{2}+a(n-a)\right\}
\]
edges. We apply this theorem with
\[
n=2\ell\qquad\text{and}\qquad a=\ell-1.
\]
The first of the two possible bounds becomes
\[
\binom{2a+1}{2}=\binom{2\ell-1}{2}.
\]
The second becomes
\begin{align*}
\binom a2+a(n-a)
&=\binom{\ell-1}{2}+(\ell-1)\bigl(2\ell-(\ell-1)\bigr)\\
&=\frac{(\ell-1)(\ell-2)}2+(\ell-1)(\ell+1)\\
&=\frac{3\ell(\ell-1)}2.
\end{align*}
The difference between the first bound and the second is
\begin{align*}
\binom{2\ell-1}{2}-\frac{3\ell(\ell-1)}2
&=(\ell-1)(2\ell-1)-\frac{3\ell(\ell-1)}2\\
&=\frac{(\ell-1)(\ell-2)}2\ge0.
\end{align*}
Therefore the larger Erd\H{o}s--Gallai bound is
$\binom{2\ell-1}{2}$, and hence
\[
e(H)\le\binom{2\ell-1}{2}.
\]

Finally, $K_{2\ell}$ has $\binom{2\ell}{2}$ edges, so the number of its
edges that are absent from $H$ is at least
\[
\binom{2\ell}{2}-\binom{2\ell-1}{2}=2\ell-1.
\]
This also proves the equivalent formulation. The bound is sharp, as shown
by $K_{2\ell-1}\cup K_1$, which has no perfect matching and has exactly
$\binom{2\ell-1}{2}$ edges.
\end{proof}

\begin{lemma}[Target graph]\label{lem:target}
For $q\ge0$,
\[
\lambda\big(K_{2k-1}\vee qK_1\big)=(k-1)+\sqrt{(k-1)^2+(2k-1)q}.
\]
The graph $K_{2k-1}\vee qK_1$ is $kK_3$-free, has $\binom{2k-1}{2}+(2k-1)q$ edges, and if $m=\binom{2k-1}{2}+(2k-1)q$ (equivalently $q=\frac{m}{2k-1}-(k-1)$) then
\[
\lambda\big(K_{2k-1}\vee qK_1\big)=(k-1)+\sqrt{m-k(k-1)}.
\]
\end{lemma}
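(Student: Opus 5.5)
The statement has three parts: the spectral radius formula, the $kK_3$-freeness, and the edge count with the substitution $q=\frac{m}{2k-1}-(k-1)$. I will treat them in that order; each is elementary.

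For the spectral radius, write $H=K_{2k-1}\vee qK_1$ and use the partition $V(H)=V(K_{2k-1})\cup V(qK_1)$. This partition is equitable: a clique vertex has $2k-2$ neighbours in the clique and $q$ in the independent set, while an independent vertex has $2k-1$ neighbours in the clique and none in its own cell. The quotient matrix is therefore
\[
Q=\begin{pmatrix}2k-2&q\\ 2k-1&0\end{pmatrix}.
\]
For $q\ge1$, $H$ is connected. The Perron eigenvector of $Q$ is positive and lifts to a positive eigenvector of $A(H)$, so by Perron--Frobenius its eigenvalue is $\lambda(H)$. The characteristic equation of $Q$ is $\lambda^2-(2k-2)\lambda-(2k-1)q=0$. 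Its largest root is
\[
\lambda=(k-1)+\sqrt{(k-1)^2+(2k-1)q}.
\]
For $q=0$ the graph is $K_{2k-1}$, whose spectral radius is $2k-2$, which matches the formula. So no separate argument is needed beyond noting this case.

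For $kK_3$-freeness, $H$ has an independent set $I$ of size $q$. Every triangle contains at most one vertex of $I$, hence at least two vertices of the clique. A packing of $k$ disjoint triangles would therefore need at least $2k>2k-1$ clique vertices, which is impossible.

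For the edge count, $H$ has $\binom{2k-1}{2}$ edges inside the clique and $(2k-1)q$ edges to $I$, giving $e(H)=\binom{2k-1}{2}+(2k-1)q$. Setting $m$ equal to this gives $q=\frac{m}{2k-1}-(k-1)$, since $\binom{2k-1}{2}=(2k-1)(k-1)$. Then
\[
(k-1)^2+(2k-1)q=(k-1)^2+m-(2k-1)(k-1)=m-k(k-1),
\]
which yields $\lambda(H)=(k-1)+\sqrt{m-k(k-1)}$.

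None of these steps is a real obstacle. The only point needing care is justifying that the larger quotient eigenvalue is the spectral radius, and the lifting of the positive quotient eigenvector settles this.
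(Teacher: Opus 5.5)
Your proposal is correct and matches the paper's proof essentially step by step. Both use the equitable partition with the same quotient matrix, lift the positive Perron vector of $Q$ when $q\ge1$ (treating $q=0$ separately as $K_{2k-1}$), argue $kK_3$-freeness by counting two clique vertices per triangle, and finish with the same edge count and substitution.
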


\begin{proof}
Write
\[
G_q:=K_{2k-1}\vee qK_1,
\]
and let $C$ and $I$ denote, respectively, its clique and independent
parts. We first compute $\lambda(G_q)$. If $q=0$, then
$G_q=K_{2k-1}$, so
\[
\lambda(G_q)=2k-2
=(k-1)+\sqrt{(k-1)^2},
\]
which is the required formula.

Now suppose $q\ge1$. Every vertex of $C$ has $2k-2$ neighbors in
$C$ and all $q$ vertices of $I$ as neighbors. Every vertex of $I$
has all $2k-1$ vertices of $C$ as neighbors and no neighbor in $I$.
Thus the partition $V(G_q)=C\cup I$ is equitable, with quotient matrix
\[
Q=\begin{pmatrix}
2k-2&q\\
2k-1&0
\end{pmatrix}.
\]
Its characteristic polynomial is
\[
\det(xI-Q)=x^2-2(k-1)x-(2k-1)q.
\]
Therefore its larger eigenvalue is
\[
\theta=(k-1)+\sqrt{(k-1)^2+(2k-1)q}.
\]
For completeness, this quotient eigenvalue is indeed the spectral
radius of $G_q$. Since $q\ge1$, the nonnegative matrix $Q$ is
irreducible and has a positive eigenvector $(\alpha,\beta)^\top$
for $\theta$. Assigning coordinate $\alpha$ to every vertex of $C$
and coordinate $\beta$ to every vertex of $I$ produces a positive
eigenvector of $A(G_q)$ with eigenvalue $\theta$. The graph $G_q$ is
connected, so the Perron--Frobenius theorem gives
\[
\lambda(G_q)=\theta.
\]

We next verify that $G_q$ is $kK_3$-free. Since $I$ is independent,
every triangle contains at least two vertices of $C$. Hence $k$
pairwise vertex-disjoint triangles would require at least $2k$
distinct vertices of $C$. This is impossible because $|C|=2k-1$.

Finally, the edges of $G_q$ consist of the edges inside $C$ and all
edges between $C$ and $I$. Consequently,
\[
e(G_q)=\binom{2k-1}{2}+(2k-1)q
=(2k-1)(q+k-1).
\]
If this number is $m$, then
\[
q=\frac{m}{2k-1}-(k-1).
\]
Substituting this expression into the radicand gives
\begin{align*}
(k-1)^2+(2k-1)q
&=(k-1)^2+m-(2k-1)(k-1)\\
&=m-k(k-1).
\end{align*}
Therefore
\[
\lambda(G_q)=(k-1)+\sqrt{m-k(k-1)},
\]
as claimed.
\end{proof}

\begin{lemma}[Hub-Clique Lemma]\label{lem:hub}
Let $S$ be a clique of size $\ell$, let $A'$ be a set disjoint from $S$, and let $h_1,\dots,h_j$ ($0\le j\le\ell$) be distinct vertices not in $S\cup A'$. Suppose:
\begin{itemize}
\item[(a)] every vertex of $A'$ is adjacent to every vertex of $S$;
\item[(b)] every $h_i$ is adjacent to every vertex of $S$ and to every vertex of $A'$
\end{itemize}
(no assumption is made about edges within $A'$ or among the $h_i$). If
\[
|A'|\ \ge\ j+\Big\lfloor\frac{\ell-j}{2}\Big\rfloor,
\]
then $S\cup A'\cup\{h_1,\dots,h_j\}$ contains at least $j+\big\lfloor(\ell-j)/2\big\rfloor$ pairwise vertex-disjoint triangles.
\end{lemma}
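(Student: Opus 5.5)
The plan is a direct explicit construction of the triangles in two phases. First we use the hubs, then we pair up the leftover clique vertices. Write $p=\lfloor(\ell-j)/2\rfloor$ and note that $j+2p\le\ell$. Choose distinct vertices $s_1,\dots,s_j\in S$, which is possible since $j\le\ell$. Choose distinct vertices $a_1,\dots,a_j,a'_1,\dots,a'_p\in A'$, which is possible because $|A'|\ge j+p$. Choose further distinct vertices $t_1,t'_1,\dots,t_p,t'_p\in S\setminus\{s_1,\dots,s_j\}$, which is possible because $|S|-j=\ell-j\ge 2p$.

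\emph{Hub triangles.} For $1\le i\le j$, take $T_i=\{h_i,s_i,a_i\}$. Each of its three edges is present:
\begin{itemize}
\item $h_is_i$ is an edge by (b), since $h_i$ is joined to all of $S$;
\item $h_ia_i$ is an edge by (b), since $h_i$ is joined to all of $A'$;
\item $s_ia_i$ is an edge by (a).
\end{itemize}

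\emph{Clique triangles.} For $1\le i\le p$, take $T'_i=\{t_i,t'_i,a'_i\}$. Its edges are also present:
\begin{itemize}
\item $t_it'_i$ is an edge because $S$ is a clique;
\item $t_ia'_i$ and $t'_ia'_i$ are edges by (a).
\end{itemize}

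All chosen vertices are distinct. The hubs are distinct and lie outside $S\cup A'$, the chosen vertices of $S$ are distinct, and the chosen vertices of $A'$ are distinct. Hence the $j+p$ triangles are pairwise vertex-disjoint and lie in $S\cup A'\cup\{h_1,\dots,h_j\}$, as required.

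There is no real obstacle. The only point needing care is the bookkeeping: each hub uses exactly one $S$-vertex and one $A'$-vertex, and each clique triangle uses two $S$-vertices and one $A'$-vertex. This is exactly why the hypothesis $|A'|\ge j+\lfloor(\ell-j)/2\rfloor$ appears. No edges inside $A'$ or among the hubs are needed. The degenerate cases $j=0$ and $p=0$ are covered by the same construction with the corresponding family of triangles empty.
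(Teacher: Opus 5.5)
Your proposal is correct and takes essentially the same approach as the paper: each hub is paired with one clique vertex and one vertex of $A'$, and disjoint pairs of the remaining clique vertices are completed by fresh vertices of $A'$. As in the paper, the hypothesis on $|A'|$ is exactly what these $j+\lfloor(\ell-j)/2\rfloor$ triangles need, and no edges inside $A'$ or among the hubs are used.
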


\begin{proof}
The purpose of the lemma is to turn the common-neighbor configuration
in \textup{(a)} and \textup{(b)} into an explicit triangle packing. The
idea is to use each hub with one clique vertex and one vertex of $A'$.
After that, we pair as many of the unused clique vertices as possible
and complete each pair to a triangle with another vertex of $A'$.

Put
\[
a:=\left\lfloor\frac{\ell-j}{2}\right\rfloor.
\]
Because $0\le j\le\ell$, we may first choose $j$ distinct vertices
$c_1,\ldots,c_j$ of $S$, assigning $c_i$ to the hub $h_i$. There are
$\ell-j$ clique vertices left. By the definition of $a$, we may write
\[
\ell-j=2a+\varepsilon,
\qquad \varepsilon\in\{0,1\}.
\]
Thus $2a$ of the remaining vertices can be divided into $a$ disjoint
pairs. Denote them by
\[
\{c_1',c_1''\},\ldots,\{c_a',c_a''\}.
\]
If $\varepsilon=1$, exactly one vertex of $S$ remains unused. In
particular, all the vertices
\[
c_1,\ldots,c_j,c_1',c_1'',\ldots,c_a',c_a''
\]
are distinct. This also explains the floor in the statement: once one
clique vertex has been reserved for each hub, every further triangle
requires two of the remaining clique vertices.

The hypothesis on $|A'|$ says precisely that $A'$ contains at least
$j+a$ vertices. Choose pairwise distinct vertices
\[
x_1,\ldots,x_j,y_1,\ldots,y_a\in A'.
\]
The vertices $x_1,\ldots,x_j$ will be used with the hubs, while the
vertices $y_1,\ldots,y_a$ will complete the clique pairs. Define
\[
T_i:=\{h_i,c_i,x_i\}\quad(1\le i\le j),
\qquad
U_t:=\{c_t',c_t'',y_t\}\quad(1\le t\le a).
\]
We verify the edges in these triples. For each $i$, assumption
\textup{(b)} gives $h_ic_i,h_ix_i\in E(G)$, because $c_i\in S$ and
$x_i\in A'$. Assumption \textup{(a)} gives $c_ix_i\in E(G)$. Hence
$T_i$ is a triangle. For each $t$, the edge $c_t'c_t''$ is present
because $S$ is a clique, and assumption \textup{(a)} gives
$y_tc_t',y_tc_t''\in E(G)$. Hence $U_t$ is also a triangle. Notice
that this verification uses no edge within $A'$ and no edge between two
hubs, which is why no such edges are required in the hypotheses.

It remains to check that the triangles are mutually disjoint. Two
different triangles $T_i$ and $T_{i'}$ use distinct hubs, distinct
assigned clique vertices, and distinct vertices of $A'$. Two different
triangles $U_t$ and $U_{t'}$ use disjoint clique pairs and distinct
vertices of $A'$. Finally, $T_i$ and $U_t$ are disjoint because $c_i$
does not belong to any of the chosen pairs, $x_i\ne y_t$, and the hub
$h_i$ lies outside $S\cup A'$. Thus all the constructed triangles are
pairwise vertex-disjoint.

There are $j$ triangles of the first kind and $a$ of the second kind,
so their total number is
\[
j+a=j+\left\lfloor\frac{\ell-j}{2}\right\rfloor
\]
as required. When $j=0$, the family $T_i$ is empty, and when $a=0$,
the family $U_t$ is empty; hence the same argument also covers the
boundary cases. This proves the lemma.
\end{proof}

\begin{remark}\label{rem:threshold}
Solving $j+\lfloor(\ell-j)/2\rfloor\ge k$ for integers $0\le j\le\ell$:
\[
j+\Big\lfloor\frac{\ell-j}{2}\Big\rfloor\ge k \iff \ell\ge 2k-j.
\]
In particular, provided that the corresponding lower bound on $|A'|$
in Lemma~\ref{lem:hub} is satisfied, one hub with $\ell\ge2k-1$, or
two hubs with $\ell\ge2k-2$, forces at least $k$ disjoint triangles.
\end{remark}

\section{The inductive reduction}

Throughout this section fix $k\ge3$ and put $r:=k-1$. We assume that the target spectral bound is valid at level $k-1$: for $k=3$ this is the published $2K_3$ theorem \cite{Wang2026}, while for $k\ge4$ it is the preceding inductive level of Theorem~\ref{thm:main}. Let $m\ge M_0(k)$ and let $G$ be a $kK_3$-free graph with $e(G)=m$ of maximum spectral radius among all such graphs. Since isolated vertices affect neither $m$, $\lambda$, nor $kK_3$-freeness, we assume throughout that $G$ has no isolated vertices.
Such a maximizer exists: after isolated vertices are deleted, every
$m$-edge graph has at most $2m$ vertices, so only finitely many graphs
need be considered.

We collect all size requirements in one place. Set $M_0(2)=18$, as in
the theorem of Wang, Jia and Ni. Once $M_0(k-1)$ has been fixed, let
$M_{\rm cone}(k-1)$ be the threshold in the cone-exclusion claim below.
Let $M_{\rm dens}(k)$, $M_{\rm out}(k)$, and $M_{\rm core}(k)$ be
thresholds large enough for the density reduction, the bounded-outer-layer
argument, and the finite-core exclusion, respectively. Finally, choose
$M_{\rm zero}(k)$ large enough to cover both the zero-defect equality
argument and the final elimination of $B$ in Lemma~\ref{lem:Bempty}.
We take
\[
M_0(k):=\max\bigl\{M_0(k-1),M_{\rm cone}(k-1),M_{\rm dens}(k),
M_{\rm out}(k),M_{\rm core}(k),M_{\rm zero}(k),k(k-1)+1,
(2k-1)(k-1)\bigr\}.
\]
Each auxiliary threshold is obtained in its indicated proof from finitely
many inequalities involving only $k$ (and, for the cone claim, the already
fixed $M_0(k-1)$). In particular, the recursion is well founded: in
general $M_{\rm cone}(s)$ depends only on $s$ and $M_0(s)$, never on a
later threshold. The restriction to sufficiently large size is essential;
for example, $K_{2,3}$ is $3K_3$-free, has $m=6$ and spectral radius
$\sqrt6>2=2+\sqrt{m-6}$.

If $(2k-1)\mid m$, Lemma~\ref{lem:target} exhibits a $kK_3$-free graph with $m$ edges and $\lambda=(k-1)+\sqrt{m-k(k-1)}$, so the maximality of $\lambda(G)$ forces $\lambda(G)\ge(k-1)+\sqrt{m-k(k-1)}$. To prove the target upper bound it suffices to consider the hypothesis
\begin{equation}
\lambda(G)\ \ge\ (k-1)+\sqrt{m-k(k-1)}.\label{eq:star}
\end{equation}
since if \eqref{eq:star} fails, the target bound already holds strictly for $G$ and equality is impossible. We assume \eqref{eq:star} for the remainder of this section.

In the next Lemma, we prove the connectivity of the extremal graph.

\begin{lemma}\label{lem:connected}
	Let $k\ge2$, let $m\ge M_0(k)$, and let $G$ be a $kK_3$-free graph with $e(G)=m$ edges, no isolated vertices, and maximum spectral radius among all such graphs. Then $G$ is connected.
\end{lemma}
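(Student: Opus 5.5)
Suppose $G$ is disconnected with components $G_1,\dots,G_s$, $s\ge2$, each containing at least one edge since $G$ has no isolated vertices. I will produce another $kK_3$-free graph with $m$ edges and strictly larger spectral radius, contradicting maximality. The spectral radius of $G$ equals $\lambda(G_1)$ for some component, say $G_1$; write $x$ for its Perron vector and $u$ for a vertex of $G_1$ with $x_u$ maximal. The idea is to move an edge from another component onto $G_1$ without creating $k$ disjoint triangles.

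The plan is as follows. Take a component $G_2\ne G_1$ and an edge $ab$ of $G_2$; choose $ab$ so that $b$ becomes isolated after deletion if possible (e.g.\ a leaf edge of a spanning tree of $G_2$). Delete $ab$ and add a pendant edge $uw$, where $w$ is a new vertex (or the freed vertex $b$ if it became isolated). The new graph $G'$ still has $m$ edges. Adding a pendant edge at $u$ to the connected graph $G_1$ strictly increases $\lambda(G_1)$ by Perron--Frobenius, since $G_1+uw$ is connected and properly contains $G_1$. Deleting $ab$ from $G_2$ does not increase any other component's radius. Hence $\lambda(G')\ge\lambda(G_1+uw)>\lambda(G_1)=\lambda(G)$.

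It remains to check that $G'$ is $kK_3$-free. A pendant vertex $w$ lies in no triangle, and deleting an edge cannot create triangles. So every triangle packing of $G'$ is a triangle packing of $G$ using the same vertex sets, and $G'$ is $kK_3$-free. Isolated vertices created by the deletion can be discarded, so $G'$ is a competitor with no isolated vertices. This contradicts the choice of $G$, so $G$ is connected.

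The only delicate point is the strictness of the increase. This needs $G_1$ to be connected with positive Perron vector: $\lambda(G_1+uw)\ge \mathcal R(x\oplus 0)=\lambda(G_1)$, and equality would make $x\oplus0$ a Perron eigenvector of a connected graph with a zero entry, which is impossible. Alternatively, one can view this as an instance of the shift idea of Lemma~\ref{lem:shift}. No size threshold is actually needed here, so the hypothesis $m\ge M_0(k)$ plays no role in this lemma.
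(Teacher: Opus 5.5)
Your proof is correct and is essentially the paper's argument. The paper also deletes an edge $vw$ of another component and reattaches it at a vertex $u$ of the maximizing component $G_1$ as a bridge $uv$, so no triangle is created. It gets the strict increase from the zero-extended Perron vector of $G_1$ failing the eigenequation at the new endpoint. You attach a fresh (or freed) pendant vertex instead of the endpoint $v\in V(G_2)$, which is equally valid because the vertex count is not fixed. (A leaf edge of a spanning tree need not actually isolate $b$, but your new-vertex fallback makes that harmless.)
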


\begin{proof}
Suppose that $G$ is disconnected. Since the adjacency matrix of $G$
is block diagonal with one block for each component,
\[
\lambda(G)=\max\{\lambda(H):H\text{ is a component of }G\}.
\]
Choose a component $G_1$ satisfying $\lambda(G_1)=\lambda(G)$.
Because $G$ has no isolated vertices and is disconnected, it has
another component $G_2$ containing an edge. Choose
\[
u\in V(G_1)\qquad\text{and}\qquad vw\in E(G_2),
\]
and form
\[
G':=G-vw+uv.
\]
The graph $G'$ still has $m$ edges. Deleting $vw$ cannot create a new
triangle, while $uv$ is the only edge joining the original component
$G_1$ to vertices outside $G_1$. Hence $uv$ is a bridge and lies on no
triangle. It follows that every triangle of $G'$ was already a triangle
of $G$, so $G'$ remains $kK_3$-free. If deleting $vw$ creates an
isolated vertex, we delete that isolated vertex as well; this changes
neither the edge count nor the spectral radius.

Let $z$ be a unit Perron vector of $G_1$, extended by zero to all other
vertices of $G'$. Both endpoints of the deleted edge $vw$ have
$z$-coordinate zero. The new edge $uv$ also contributes zero to the
Rayleigh quotient because $z_v=0$. Therefore
\begin{align*}
z^\top A(G')z
&=z^\top A(G)z\\
&=z^\top A(G_1)z
=\lambda(G_1).
\end{align*}
By the variational characterization of the largest eigenvalue,
\[
\lambda(G')\ge\lambda(G_1)=\lambda(G).
\]

We claim that this inequality is strict. Suppose, to the contrary, that
\[
\lambda(G')=\lambda(G_1).
\]
We already know that
\[
z^\top A(G')z=\lambda(G_1)=\lambda(G').
\]
Thus the unit vector $z$ attains the maximum possible Rayleigh
quotient of the real symmetric matrix $A(G')$. We recall why this
forces $z$ to be an eigenvector. Let
\[
\lambda_1=\lambda(G')\ge\lambda_2\ge\cdots\ge\lambda_n
\]
be the eigenvalues of $A(G')$, with corresponding orthonormal
eigenvectors $\phi_1,\ldots,\phi_n$, and write
$z=\sum_i\alpha_i\phi_i$. Then
\[
z^\top A(G')z=\sum_i\lambda_i\alpha_i^2
\le\lambda_1\sum_i\alpha_i^2=\lambda_1.
\]
Equality is possible only if $\alpha_i=0$ whenever
$\lambda_i<\lambda_1$. Hence $z$ lies entirely in the eigenspace
corresponding to $\lambda_1$, and therefore
\[
A(G')z=\lambda(G')z=\lambda(G_1)z.
\]

We now examine this purported eigenequation at the vertex $v$. Since
$v$ belonged to the component $G_2$, every old neighbor of $v$ lies
outside $G_1$ and therefore has $z$-coordinate zero. The only new
neighbor of $v$ is $u\in V(G_1)$. Moreover, $z_u>0$ because $z$ is a
Perron vector of the connected graph $G_1$. Thus
\[
(A(G')z)_v=z_u>0=\lambda(G_1)z_v,
\]
where the last equality uses $z_v=0$. This contradicts the
eigenequation at $v$. Thus
$\lambda(G')>\lambda(G)$, which contradicts the extremality of $G$.
Therefore $G$ is connected.
\end{proof}

Let $x$ be the Perron vector of $G$, let $u^*$ be a vertex of maximum coordinate, and set $A=N_G(u^*)$, $B=V(G)\setminus(A\cup\{u^*\})$. Counting edges by their intersection with $\{u^*\},A,B$ gives
\begin{equation}
m=|A|+e(A)+e(A,B)+e(B),\label{eq:mdecomp}
\end{equation}
and applying the eigenequation twice (once at $u^*$, once summed over $A$) gives
\begin{equation}
\lambda^2x_{u^*}=|A|x_{u^*}+\sum_{u\in A}d_A(u)x_u+\sum_{w\in B}d_A(w)x_w.\label{eq:lambda2}
\end{equation}
By Lemma~\ref{lem:target}, the equality graph satisfies $\lambda^2-2(k-1)\lambda=m-(k-1)(2k-1)$ exactly, and since $t\mapsto t^2-2(k-1)t$ is increasing for $t>k-1$, hypothesis \eqref{eq:star} gives
\begin{equation}
\big(m-(k-1)(2k-1)\big)x_{u^*}\ \le\ \big(\lambda^2-2(k-1)\lambda\big)x_{u^*}.\label{eq:ineq3}
\end{equation}
Let $A^+=\{u\in A:d_A(u)>0\}$ and $A^{0}=A\setminus A^+$.
We now give the details leading to the next inequality. The eigenequation
at $u^*$ is
\[
\lambda x_{u^*}=\sum_{u\in A}x_u.
\]
Combining this identity with \eqref{eq:lambda2}, we obtain
\begin{align*}
\bigl(\lambda^2-2(k-1)\lambda\bigr)x_{u^*}
&=|A|x_{u^*}
 +\sum_{u\in A}\bigl(d_A(u)-2(k-1)\bigr)x_u
 +\sum_{w\in B}d_A(w)x_w.
\end{align*}
Therefore \eqref{eq:ineq3} implies
\begin{align*}
\bigl(m-(k-1)(2k-1)\bigr)x_{u^*}
&\le |A|x_{u^*}
 +\sum_{u\in A}\bigl(d_A(u)-2(k-1)\bigr)x_u\\
&\qquad +\sum_{w\in B}d_A(w)x_w.
\end{align*}
By the edge decomposition \eqref{eq:mdecomp},
\[
m-|A|=e(A)+e(A,B)+e(B).
\]
Moving $|A|x_{u^*}$ to the left, using this identity, and then solving
for $e(B)x_{u^*}$ gives
\begin{align*}
e(B)x_{u^*}
&\le \sum_{u\in A}\bigl(d_A(u)-2(k-1)\bigr)x_u
 +\sum_{w\in B}d_A(w)x_w\\
&\qquad-\bigl(e(A)+e(A,B)\bigr)x_{u^*}
 +(k-1)(2k-1)x_{u^*}.
\end{align*}
Every edge between $A$ and $B$ is counted once by its endpoint in $B$,
so
\[
e(A,B)=\sum_{w\in B}d_A(w).
\]
After division by the positive number $x_{u^*}$, the terms involving
$B$ can therefore be rewritten as
\[
\sum_{w\in B}d_A(w)\frac{x_w}{x_{u^*}}-e(A,B)
=-\sum_{w\in B}d_A(w)
 \left(1-\frac{x_w}{x_{u^*}}\right).
\]
Finally, if $u\in A^0$, then $d_A(u)=0$. Hence
\begin{align*}
\sum_{u\in A}\bigl(d_A(u)-2(k-1)\bigr)\frac{x_u}{x_{u^*}}
&=\sum_{u\in A^+}\bigl(d_A(u)-2(k-1)\bigr)
  \frac{x_u}{x_{u^*}}\\
&\qquad-2(k-1)\sum_{u\in A^0}\frac{x_u}{x_{u^*}}.
\end{align*}
Substituting these two identities into the preceding bound yields
\begin{equation}
e(B)\ \le\ \sum_{u\in A^+}\big(d_A(u)-2(k-1)\big)\frac{x_u}{x_{u^*}}-e(A)-2(k-1)\sum_{u\in A^{0}}\frac{x_u}{x_{u^*}}-\sum_{w\in B}d_A(w)\Big(1-\frac{x_w}{x_{u^*}}\Big)+(k-1)(2k-1),\label{eq:master}
\end{equation}
which we use repeatedly below to constrain the structure of $A$ and $B$.

In the next lemma, we show that the edges of $A$ are numerous.

\begin{lemma}\label{lem:eAlarge}
In the setting above, $e(A)\to\infty$ as $m\to\infty$. Equivalently,
for every fixed $E_0=E_0(k)$, one has $e(A)\ge E_0$ for all sufficiently
large $m$.
\end{lemma}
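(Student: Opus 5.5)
The plan is to read \eqref{eq:master} as an upper bound on the \emph{total} Perron mass of $A$. That mass equals $\lambda x_{u^*}$, and $\lambda$ is unbounded by \eqref{eq:star}, so $e(A)$ cannot stay bounded. We argue by contradiction: suppose that for some fixed $E_0=E_0(k)$ there are arbitrarily large $m$ with $e(A)<E_0$.

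\textbf{Step 1: bound every nonnegative term of \eqref{eq:master}.} Since $x_u\le x_{u^*}$ for all $u$, the first sum satisfies
\[
\sum_{u\in A^+}\bigl(d_A(u)-2(k-1)\bigr)\frac{x_u}{x_{u^*}}\le\sum_{u\in A^+}d_A(u)=2e(A).
\]
The term $\sum_{w\in B}d_A(w)\bigl(1-x_w/x_{u^*}\bigr)$ is nonnegative, and so is $e(B)$. Substituting these into \eqref{eq:master} gives
\[
0\le e(B)\le 2e(A)-e(A)-2(k-1)\sum_{u\in A^0}\frac{x_u}{x_{u^*}}+(k-1)(2k-1).
\]
Hence
\[
2(k-1)\sum_{u\in A^0}\frac{x_u}{x_{u^*}}\le e(A)+(k-1)(2k-1)<E_0+(k-1)(2k-1).
\]
So the normalized Perron mass on $A^0$ is bounded by a constant depending only on $k$ and $E_0$; this uses $k\ge2$, so that $2(k-1)>0$.

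\textbf{Step 2: bound the mass on $A^+$.} Every vertex of $A^+$ is incident with an edge of $G[A]$, so $|A^+|\le 2e(A)<2E_0$. Each of its coordinates is at most $x_{u^*}$, which gives
\[
\sum_{u\in A^+}\frac{x_u}{x_{u^*}}<2E_0 .
\]

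\textbf{Step 3: conclude.} The eigenequation at $u^*$ reads $\lambda x_{u^*}=\sum_{u\in A}x_u$. Combining Steps 1 and 2,
\[
\lambda=\sum_{u\in A^+}\frac{x_u}{x_{u^*}}+\sum_{u\in A^0}\frac{x_u}{x_{u^*}}<2E_0+\frac{E_0+(k-1)(2k-1)}{2(k-1)},
\]
and the right-hand side is a constant depending only on $k$ and $E_0$. On the other hand, \eqref{eq:star} gives $\lambda\ge (k-1)+\sqrt{m-k(k-1)}$, which tends to infinity. For $m$ beyond a threshold depending only on $k$ and $E_0$ this is a contradiction, so $e(A)\ge E_0$ for all sufficiently large $m$. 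This threshold is one of the finitely many inequalities absorbed into $M_{\rm dens}(k)$.

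The one point needing care is Step 1. It relies on $\sum_{u\in A^+}d_A(u)=2e(A)$, so that the first term of \eqref{eq:master} is cancelled by $-e(A)$ up to a single copy of $e(A)$; this leaves the $A^0$ mass controlled by $e(A)$ rather than by $m$. Everything else is a direct consequence of $x_u\le x_{u^*}$ and the eigenequation at $u^*$.
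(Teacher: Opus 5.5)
Your proof is correct and is essentially the paper's argument. Both use only the double eigenequation at $u^*$, the bound $x_v\le x_{u^*}$, the edge decomposition \eqref{eq:mdecomp}, and \eqref{eq:star}; combining the paper's bound $\lambda^2\le m+e(A)-e(B)$ with \eqref{eq:ineq3} gives precisely $2(k-1)\lambda\le e(A)-e(B)+(k-1)(2k-1)$, which is what you extract from \eqref{eq:master}. (Your split into $A^+$ and $A^0$ is harmless but unnecessary, since $\sum_{u\in A}\bigl(d_A(u)-2(k-1)\bigr)x_u/x_{u^*}\le 2e(A)-2(k-1)\lambda$ gives that inequality in one line.)
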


\begin{proof}
Fix a constant $E_0=E_0(k)$ and suppose, for a contradiction, that
\[
e(A)\le E_0-1.
\]
Divide \eqref{eq:lambda2} by $x_{u^*}$. Since $u^*$ was chosen with
maximum Perron coordinate,
\[
0<\frac{x_v}{x_{u^*}}\le1
\qquad\text{for every }v\in V(G).
\]
It follows that
\begin{align*}
\lambda^2
&=|A|
  +\sum_{u\in A}d_A(u)\frac{x_u}{x_{u^*}}
  +\sum_{w\in B}d_A(w)\frac{x_w}{x_{u^*}}\\
&\le |A|+\sum_{u\in A}d_A(u)+\sum_{w\in B}d_A(w)\\
&=|A|+2e(A)+e(A,B).
\end{align*}
Using the edge decomposition \eqref{eq:mdecomp}, we can rewrite the
last expression as
\[
|A|+2e(A)+e(A,B)=m+e(A)-e(B).
\]
Since $e(B)\ge0$ and $e(A)\le E_0-1$, we obtain
\begin{equation}
\lambda^2\le m+E_0-1.                                             \label{eq:eA-upper}
\end{equation}

On the other hand, squaring the lower bound \eqref{eq:star} gives
\begin{align*}
\lambda^2
&\ge
\left((k-1)+\sqrt{m-k(k-1)}\right)^2\\
&=m-(k-1)+2(k-1)\sqrt{m-k(k-1)}.
\end{align*}
For fixed $k$, the quantity
\[
2(k-1)\sqrt{m-k(k-1)}-(k-1)
\]
tends to infinity with $m$. Hence, for all sufficiently large $m$,
the preceding lower bound is strictly larger than $m+E_0-1$,
contradicting \eqref{eq:eA-upper}. Therefore $e(A)\ge E_0$ for every
fixed $E_0$ once $m$ is sufficiently large. Equivalently,
$e(A)\to\infty$ as $m\to\infty$.
\end{proof}

We next show directly, using the inductive hypothesis only through a
cone estimate, that the required triangle packing can be chosen away from
$u^*$.

\begin{lemma}\label{lem:avoidustar}
For $m\ge M_0(k)$, the graph $G-u^*$ contains $k-1$ pairwise vertex-disjoint triangles.
\end{lemma}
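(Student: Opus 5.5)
The plan is to argue by contradiction and use the inductive hypothesis at level $k-1$ only through a spectral estimate for cones. Suppose $G':=G-u^*$ contains no $k-1$ pairwise vertex-disjoint triangles, i.e.\ $G'$ is $(k-1)K_3$-free. Then $G$ is a spanning subgraph of the cone $K_1\vee G'$ with apex $u^*$, where $e(G')=m-|A|$ and $u^*$ is joined exactly to $A$. Write $a:=|A|$ and $\mu:=\lambda(G')$. Write $x'$ for the restriction of $x$ to $V(G')$ and $A'$ for $A(G')$.

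\emph{Step 1: the cone identity.} Splitting the eigenequation of $G$ into the equation at $u^*$ and the equations on $V(G')$ gives
\[
(\lambda I-A')x'=x_{u^*}\mathbf 1_A,\qquad \lambda x_{u^*}=\mathbf 1_A^\top x' .
\]
Since $\lambda>\mu$ (the graph $G$ is connected by Lemma~\ref{lem:connected} and contains $G'$ properly), $\lambda I-A'$ is invertible, and
\[
\lambda=\mathbf 1_A^\top(\lambda I-A')^{-1}\mathbf 1_A
=\sum_i\frac{(\mathbf 1_A^\top\phi_i)^2}{\lambda-\mu_i},
\]
where $\mu_i$ and $\phi_i$ are the eigenvalues and orthonormal eigenvectors of $A'$. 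The crude form of this identity is $\lambda(\lambda-\mu)\le a$, that is, $\lambda^2-a\le\lambda\mu$.

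\emph{Step 2: combine with the induction hypothesis and \eqref{eq:star}.} Squaring \eqref{eq:star} gives
\[
\lambda^2\ge m+2(k-1)\sqrt m-O_k(1).
\]
I would split into cases according to the size of $m-a$.
\begin{itemize}
\item If $m-a<M_0(k-1)$, the trivial bound $\mu\le\sqrt{2(m-a)}=O_k(1)$ applies. Then $\lambda^2-a\le\lambda\mu$ forces $m+2(k-1)\sqrt m-O(1)-a\le O(\sqrt m)$, with an explicit constant. Since $a\le m$, the needed inequality reduces to $\mu<2(k-1)-o(1)$ essentially, which holds for constant $m-a$ only if $\mu$ is small enough. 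So this case needs the bound $\mu\le\sqrt{2(m-a)}$ to be compared carefully, or a direct argument that $e(G')$ bounded contradicts Lemma~\ref{lem:eAlarge}. Lemma~\ref{lem:eAlarge} gives $e(A)\to\infty$, and $e(A)\le e(G')$, so in fact $m-a=e(G')\to\infty$, which disposes of this case once $m\ge M_{\rm cone}$.
\item If $m-a\ge M_0(k-1)$, the induction gives $\mu\le(k-2)+\sqrt{m-a-(k-1)(k-2)}$. I would then show that $\lambda(\lambda-\mu)\le a$ contradicts \eqref{eq:star}.
\end{itemize}

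\emph{Step 3 (the main obstacle): the middle range of $a$.} A direct computation with $S=\sqrt{m-k(k-1)}$ and $T=\sqrt{m-a}$ shows that the crude bound gives the contradiction when $a$ is small or when $T$ is small. It fails in the intermediate regime $T\asymp\sqrt S$, because the estimate $\mathbf 1_A^\top(\lambda I-A')^{-1}\mathbf 1_A\le a/(\lambda-\mu)$ wastes the fact that most of $\mathbf 1_A$ is orthogonal to the top eigenvector of $G'$. I would sharpen Step 1 in two ways.
\begin{itemize}
\item Separate the Perron term, bounding the rest by $a/(\lambda-\mu_2)$ with $\mu_2\le\sqrt{e(G')}$, or more simply by $a/(\lambda-\sqrt{2e(G')})$ on the orthogonal complement.
\item Use that $u^*$ has maximum coordinate, so $x_v\le x_{u^*}$. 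Then $\lambda^2=a+\sum_{v}d_A(v)x_v/x_{u^*}\le a+2e(A)+e(A,B)$, and the last two terms are at most $2e(G')$ counted with the small weights coming from $\mu$.
\end{itemize}
Either refinement should yield an inequality of the shape $\lambda^2\le a+\mu\sqrt{\cdot}$ that is strictly smaller than $(k-1+S)^2$ for all $0\le a\le m$ once $m\ge M_{\rm cone}(k-1)$, depending only on $k$ and $M_0(k-1)$. This contradicts \eqref{eq:star} and shows that $G-u^*$ contains $k-1$ disjoint triangles.
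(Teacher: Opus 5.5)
Your framework is sound, and it even lets you skip a step of the paper. The paper first uses Lemma~\ref{lem:shift} (moving an edge $wz$ to $u^*w$) to show $B=\varnothing$, so that $G$ is a genuine cone. The identity $\lambda=\mathbf 1_A^\top(\lambda I-A')^{-1}\mathbf 1_A$ makes that step unnecessary, and your disposal of small $e(G')$ through Lemma~\ref{lem:eAlarge} is legitimate.

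The gap is Step~3, which is the entire content of the lemma and is left as two refinements that ``should'' work.

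\emph{The crude bound fails almost everywhere.} Take $\mu$ as large as the level-$(k-1)$ bound allows and put $\lambda_0=(k-1)+\sqrt{m-k(k-1)}$. The needed inequality $\lambda_0(\lambda_0-\mu)>a$ holds only when roughly $T(\sqrt m-T)<k\sqrt m$. So $\lambda(\lambda-\mu)\le a$ is useless for all $k\lesssim T\lesssim\sqrt m-k$ (e.g.\ $T=\sqrt m/2$), not just for $T\asymp\sqrt S$.

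\emph{Your heuristic is also wrong.} For the $(k-1)K_3$-free graph $G'=K_{2k-3}\vee qK_1$, about half of $\|\mathbf 1\|^2$ lies along the Perron vector of $G'$.

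\emph{Neither refinement closes the gap.} The first needs a bound on $c_1=(\mathbf 1_A^\top\phi_1)^2$, which you never provide. Moreover, $\mu_2\le\sqrt{e(G')}=T$ is essentially the allowed value of $\mu$, so it gains nothing by itself. The second only gives $\lambda^2\le a+2e(A)+e(A,B)=m+e(A)-e(B)$. This contradicts \eqref{eq:star} only if $e(A)<2(k-1)\sqrt{m-k(k-1)}-(k-1)$, which nothing forces. Indeed, this very estimate is what yields Lemma~\ref{lem:eAlarge}, i.e.\ that $e(A)$ is large.

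The missing idea is to combine two facts, with $c_i=(\mathbf 1_A^\top\phi_i)^2$:
\begin{itemize}
\item the exact linear constraint $\sum_i c_i\mu_i=\mathbf 1_A^\top A'\mathbf 1_A=2e(A)$;
\item the pointwise bound $\frac{1}{\lambda-\mu_i}\le\frac{\lambda+\mu_i}{\lambda^2-\mu^2}$, valid because $|\mu_i|\le\mu<\lambda$.
\end{itemize}
Together they give $\lambda^3\le\lambda(a+\mu^2)+2e(A)$. Write the level-$(k-1)$ bound as $\mu^2-2(k-2)\mu\le e(G')-(k-2)(2k-3)$, and use $a+e(G')=m$, $\mu<\lambda$, and $e(A)\le m$. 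You get $P(\lambda)\le0$ for
\[
P(t)=t^3-2(k-2)t^2-\bigl(m-(k-2)(2k-3)\bigr)t-2m .
\]
On the other hand, $P(\lambda_0)=\lambda_0-2(k-1)(2k-1)>0$, and $P$ is increasing on $[\lambda_0,\infty)$ for large $m$. This is the paper's cone-exclusion claim, and it works verbatim with $\mathbf 1_A$ in place of $\mathbf 1$.

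Note how little room there is. For $\lambda\ge\lambda_0$, dividing by $\lambda$ bounds $\lambda^2-2(k-1)\lambda$ by $m-(k-1)(2k-1)-1+O(m^{-1/2})$. This beats the value $m-(k-1)(2k-1)$ attained at $\lambda_0$ by only about $1$. Any estimate that loses something at constant order cannot close the proof, so the vague refinements in Step~3 must be replaced by this exact computation.
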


\begin{proof}
We first establish an auxiliary estimate for cones. Its size threshold
is included in the choice made at the start of this section.

\medskip
\noindent\textbf{Claim (cone exclusion).}
Let $s\ge2$, and assume that the corresponding spectral bound is valid
at level $s$ (Theorem~\ref{thm:main} with $s$ in place of $k$). There
exists a constant $M_{\mathrm{cone}}(s)$ such
that the following holds. If $H$ is $sK_3$-free, $J=K_1\vee H$, and
$m=e(J)\ge M_{\mathrm{cone}}(s)$, then
\[
  \lambda(J)
  < s+\sqrt{m-s(s+1)}.
\]

\smallskip
\noindent\emph{Proof of the claim.}
Write
\[
  n:=|V(H)|,\qquad e:=e(H),\qquad m=n+e,
\]
and put
\[
  \rho:=\lambda(J),\qquad \mu:=\lambda(H).
\]
Let $M$ be the adjacency matrix of $H$, and let $\mathbf 1$ denote the
all-ones vector in $\mathbb R^n$. Since $J$ is connected and $H$ is a
proper induced subgraph of $J$, Perron--Frobenius gives $\rho>\mu$.
Writing a Perron vector of $J$ in the form $(a,y)^\top$, where $a$ is
the coordinate at the apex of the cone, the eigenvalue equations give
\[
  y=a(\rho I-M)^{-1}\mathbf 1
  \quad\text{and}\quad
  \rho a=\mathbf 1^\top y.
\]
Consequently,
\begin{equation}\label{eq:coronal-identity}
  \rho=\mathbf 1^\top(\rho I-M)^{-1}\mathbf 1.
\end{equation}

Let $\theta_1,\dots,\theta_n$ be the eigenvalues of $M$, with an
orthonormal eigenbasis $v_1,\dots,v_n$, and set
$c_i:=(\mathbf 1^\top v_i)^2$. Since $|\theta_i|\le\mu<\rho$, we have
\[
  \frac{1}{\rho-\theta_i}
  =\frac{\rho+\theta_i}{\rho^2-\theta_i^2}
  \le
  \frac{\rho+\theta_i}{\rho^2-\mu^2}.
\]
Using \eqref{eq:coronal-identity}, together with
\[
  \sum_{i=1}^n c_i=n
  \quad\text{and}\quad
  \sum_{i=1}^n \theta_i c_i
  =\mathbf 1^\top M\mathbf 1=2e,
\]
we obtain
\begin{equation}\label{eq:cone-coronal-bound}
  \rho
  \le \frac{\rho n+2e}{\rho^2-\mu^2},
  \qquad\text{and hence}\qquad
  \rho^3\le \rho(n+\mu^2)+2e.
\end{equation}

Set
\[
  E_s:=\max\bigl\{M_0(s),\,s(s-1),\,(s-1)(2s-1),\,1\bigr\}.
\]
If $e<E_s$, then $\mu^2\le 2e<2E_s$, and, using $n\le m$,
\eqref{eq:cone-coronal-bound} yields
\[
  \rho^2
  \le n+\mu^2+\frac{2e}\rho
  \ < m+2E_s+\frac{2E_s}{\rho}.
\]
On the other hand, for
\[
  \rho_0:=s+\sqrt{m-s(s+1)}
\]
we have
\[
  \rho_0^2-m
  =2s\sqrt{m-s(s+1)}-s\longrightarrow\infty.
\]
Thus, for all sufficiently large $m$, the inequality
$\rho\ge\rho_0$ is impossible in the case $e<E_s$.

We may therefore assume that $e\ge E_s$. Since $H$ is $sK_3$-free,
the assumed level-$s$ bound gives
\[
  \mu\le (s-1)+\sqrt{e-s(s-1)}.
\]
It follows that
\begin{equation}\label{eq:mu-polynomial-bound}
  \mu^2-2(s-1)\mu
  \le e-(s-1)(2s-1).
\end{equation}
Indeed, if $\mu\ge s-1$, then the function
$t\mapsto t^2-2(s-1)t$ is increasing on $[s-1,\infty)$, and one
evaluates it at the displayed upper bound for $\mu$; if $\mu<s-1$,
then the left-hand side of \eqref{eq:mu-polynomial-bound} is
nonpositive, whereas its right-hand side is nonnegative by the
definition of $E_s$. Since $\mu<\rho$,
\eqref{eq:mu-polynomial-bound} implies
\[
  n+\mu^2
  \le m+2(s-1)\rho-(s-1)(2s-1).
\]
Substitution into \eqref{eq:cone-coronal-bound}, followed by the
trivial estimate $e\le m$, gives
\begin{equation}\label{eq:P-nonpositive}
  P_{s,m}(\rho)\le 0,
\end{equation}
where
\[
  P_{s,m}(t)
  :=t^3-2(s-1)t^2-mt+(s-1)(2s-1)t-2m.
\]

The number $\rho_0=s+\sqrt{m-s(s+1)}$ satisfies
\begin{equation}\label{eq:rho0-quadratic}
  \rho_0^2-2s\rho_0=m-s(2s+1).
\end{equation}
Using \eqref{eq:rho0-quadratic}, a direct calculation gives
\begin{equation}\label{eq:P-rho0}
  P_{s,m}(\rho_0)
  =\rho_0-2s(2s+1).
\end{equation}
Moreover,
\[
  P_{s,m}'(t)
  =3t^2-4(s-1)t-m+(s-1)(2s-1),
\]
and another use of \eqref{eq:rho0-quadratic} yields
\[
  P_{s,m}'(\rho_0)
  =2\rho_0^2+(4-2s)\rho_0-4s+1>0
\]
for all sufficiently large $m$. Since
$P_{s,m}''(t)=6t-4(s-1)>0$ for every $t\ge\rho_0$ once $m$ is large,
$P_{s,m}$ is strictly increasing on $[\rho_0,\infty)$. Enlarging
$M_{\mathrm{cone}}(s)$ once more, we may also assume that
$\rho_0>2s(2s+1)$. Therefore \eqref{eq:P-rho0} gives
\[
  P_{s,m}(t)\ge P_{s,m}(\rho_0)>0
  \qquad (t\ge\rho_0),
\]
which contradicts \eqref{eq:P-nonpositive} whenever $\rho\ge\rho_0$.
This proves the claim.
\hfill$\triangleleft$

\medskip
We now prove the lemma. Suppose, to the contrary, that $G-u^*$ is
$(k-1)K_3$-free. We first show that $B=\varnothing$. Assume that
$w\in B$. Since $G$ has no isolated vertices, choose
$z\in N_G(w)$. By the maximal choice of the Perron coordinate at
$u^*$, we have $x_{u^*}\ge x_z$, while $u^*w\notin E(G)$. Define
\[
  G':=G-wz+u^*w.
\]
Lemma~\ref{lem:shift} gives $\lambda(G')>\lambda(G)$.

We claim that $G'$ is still $kK_3$-free. Otherwise, let
$T_1',\dots,T_k'$ be pairwise vertex-disjoint triangles in $G'$. If
none of them uses the new edge $u^*w$, then all of them are already
present in $G$, contradicting the fact that $G$ is $kK_3$-free. If
one of them uses $u^*w$, then the remaining $k-1$ triangles avoid
$u^*$ and use only edges of $G$; hence they form $(k-1)$ pairwise
vertex-disjoint triangles in $G-u^*$, contrary to our assumption.
Thus $G'$ is $kK_3$-free. Since $e(G')=e(G)=m$, this contradicts the
extremality of $G$. Therefore $B=\varnothing$.

It follows that $u^*$ is adjacent to every other vertex of $G$, and
hence
\[
  G=K_1\vee (G-u^*).
\]
Apply the cone-exclusion claim with $s=k-1$ and $H=G-u^*$. By the
choice $M_0(k)\ge M_{\mathrm{cone}}(k-1)$, we
obtain
\[
  \lambda(G)
  <(k-1)+\sqrt{m-k(k-1)},
\]
contradicting hypothesis \eqref{eq:star}. Hence $G-u^*$ contains
$k-1$ pairwise vertex-disjoint triangles, as required.
\end{proof}

\begin{lemma}\label{lem:cover2}
There is a set $S\subseteq A$ with $|S|\le3(k-1)$, arising from $k-1$ pairwise disjoint triangles $T_1,\dots,T_{k-1}\subseteq G-u^*$, such that every edge of $G[A]$ meets $S$.
\end{lemma}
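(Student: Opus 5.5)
Lemma~\ref{lem:avoidustar} gives $k-1$ pairwise vertex-disjoint triangles $T_1,\dots,T_{k-1}$ in $G-u^*$. Write $W:=V(T_1)\cup\cdots\cup V(T_{k-1})$, so $|W|=3(k-1)$ and $u^*\notin W$. We set
\[
S:=W\cap A,
\]
so that $S\subseteq A$ and $|S|\le 3(k-1)$. It remains to show that every edge of $G[A]$ meets $S$. Since $S\subseteq A$, an edge of $G[A]$ meets $S$ exactly when it meets $W$. So it suffices to show that $G[A\setminus W]$ has no edges.

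Suppose instead that $ab\in E(G)$ with $a,b\in A\setminus W$. Because $a,b\in A=N_G(u^*)$, the triple $\{u^*,a,b\}$ is a triangle. Its vertices avoid $W$: the vertex $u^*$ lies outside $W$ because the $T_i$ were chosen in $G-u^*$, and $a,b$ lie outside $W$ by assumption. Hence $T_1,\dots,T_{k-1},\{u^*,a,b\}$ are $k$ pairwise vertex-disjoint triangles in $G$. This contradicts the fact that $G$ is $kK_3$-free. Therefore every edge of $G[A]$ has an endpoint in $W\cap A=S$.

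There is no real obstacle in this argument. All the substantive work, namely the cone-exclusion step that supplies a packing avoiding $u^*$, is already done in Lemma~\ref{lem:avoidustar}. The only point needing care is the bookkeeping. One must intersect $W$ with $A$ so that $S\subseteq A$ holds as stated. One must also note that vertices of $W$ lying in $B$ are irrelevant, since every edge of $G[A]$ has both endpoints in $A$.
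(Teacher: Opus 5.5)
Your proposal is correct and follows the paper's proof essentially step for step. The paper likewise takes the packing from Lemma~\ref{lem:avoidustar}, sets $S$ to be $A$ intersected with the union of the triangles, and rules out an edge $ab$ of $G[A]$ avoiding $S$ because $\{u^*,a,b\}$ would be a $k$-th disjoint triangle.
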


\begin{proof}
By Lemma~\ref{lem:avoidustar}, the graph $G-u^*$ contains
$k-1$ pairwise vertex-disjoint triangles. Fix such a collection and
write it as
\[
T_1,\ldots,T_{k-1}.
\]
Let
\[
T:=\bigcup_{i=1}^{k-1}V(T_i)
\qquad\text{and}\qquad
S:=A\cap T.
\]
Since the triangles are pairwise vertex-disjoint and each has three
vertices,
\[
|T|=3(k-1).
\]
Consequently, $S\subseteq A$ and
\[
|S|\le |T|=3(k-1).
\]

It remains to show that $S$ meets every edge of $G[A]$. Suppose, to
the contrary, that an edge $ab\in E(G[A])$ has neither endpoint in
$S$. Since $a,b\in A$ and $S=A\cap T$, this implies
\[
\{a,b\}\cap T=\varnothing.
\]
By the definition $A=N_G(u^*)$, both $a$ and $b$ are adjacent to
$u^*$. Together with the edge $ab$, these two edges show that
\[
\{u^*,a,b\}
\]
is a triangle in $G$.

This new triangle is disjoint from every $T_i$. Indeed,
$u^*\notin T$ because each $T_i$ lies in $G-u^*$, while
$a,b\notin T$ by the preceding observation. Therefore
\[
\{u^*,a,b\},T_1,\ldots,T_{k-1}
\]
are $k$ pairwise vertex-disjoint triangles in $G$. This contradicts
the assumption that $G$ is $kK_3$-free. Hence every edge of $G[A]$
meets $S$, completing the proof.
\end{proof}

For orientation, fix the triangles $T_1,\ldots,T_{k-1}$ supplied by
Lemma~\ref{lem:cover2}, write
\[
T:=\bigcup_{i=1}^{k-1}V(T_i),
\qquad S_0:=A\cap T,
\]
and let $p_0:=|S_0|\le3(k-1)$. Thus $S_0$ contains precisely those
vertices of the chosen triangles that lie in the neighborhood
$A=N_G(u^*)$. Set
\[
A_{\rm out}:=A\setminus S_0.
\]
Lemma~\ref{lem:cover2} says that $S_0$ meets every edge of $G[A]$.
Consequently, two vertices of $A_{\rm out}$ cannot be adjacent, since
an edge between them would avoid $S_0$. Hence $A_{\rm out}$ is
independent in $G[A]$. The same observation shows that every neighbor
in $A$ of a vertex $v\in A_{\rm out}$ must belong to $S_0$; that is,
$N_A(v)\subseteq S_0$. Therefore every edge of $G[A]$ either lies
inside $S_0$ or joins $S_0$ to $A_{\rm out}$, and so
\[
e(A)=e(S_0)+e(S_0,A_{\rm out}).
\]
Because $k$ is fixed, the size $p_0$ is bounded independently of $m$,
and hence $e(S_0)\le\binom{p_0}{2}=O_k(1)$. On the other hand,
Lemma~\ref{lem:eAlarge} gives $e(A)\to\infty$ as $m\to\infty$.
It follows that $e(S_0,A_{\rm out})\to\infty$. Since each vertex of
$A_{\rm out}$ has at most $p_0$ neighbors in $S_0$, we also have
\[
e(S_0,A_{\rm out})\le p_0|A_{\rm out}|,
\]
and therefore $|A_{\rm out}|\to\infty$. Thus $S_0$ is a bounded vertex
cover of $G[A]$ attached to a growing independent set. This observation
is useful, but it does not yet identify which vertices form the true
clique core, as the following remark explains.

\begin{remark}\label{rem:S0-not-clique}
The covering set $S_0$ need not be the final clique core. To see this,
consider the target graph, with $s\ge k-1$,
\[
G^\star=K_{2k-1}\vee sK_1
\]
and choose $u^*$ in its clique. Then $B=\varnothing$, and
$G^\star-u^*$ has $2k-2$ remaining clique vertices. Since the other
part is independent, every triangle in $G^\star-u^*$ contains at
least two of these clique vertices.

Consequently, any collection of $k-1$ disjoint triangles in
$G^\star-u^*$ must use all $2k-2$ remaining clique vertices, exactly
two in each triangle. Each triangle also uses one distinct vertex from
the independent set. Thus
\[
|T|=3(k-1),\qquad S_0=A\cap T=T,
\]
and hence $p_0=3(k-1)$ rather than $2(k-1)$. Moreover,
\[
G^\star[S_0]\cong K_{2k-2}\vee (k-1)K_1,
\]
so $S_0$ is not a clique when $k\ge3$.

The $k-1$ independent vertices that happen to lie in $S_0$ are no
different from the other independent vertices of $G^\star$: every
independent vertex has $A$-degree $2k-2$. In contrast, each genuine
core vertex has $A$-degree $2k-3+s$, which grows with $s$. Therefore
membership in $S_0$ does not identify the clique core; the useful
distinction is whether the $A$-degree remains bounded or grows with
$m$.
\end{remark}

Motivated by Remark~\ref{rem:S0-not-clique}, set $r=k-1$ and fix the
degree threshold
\[
N_{\rm core}:=\binom{3r}{2}+1.
\]
The next lemma includes all of the resulting data in its statement so that
the central reduction can be read independently of this setup.

\begin{lemma}[Density reduction]\label{lem:density}
Let $T_1,\ldots,T_r\subseteq G-u^*$ be the pairwise vertex-disjoint
triangles supplied by Lemma~\ref{lem:avoidustar}, put
\[
T:=\bigcup_{i=1}^rV(T_i),\qquad S_0:=A\cap T,
\]
and note that every edge of $G[A]$ meets $S_0$ by
Lemma~\ref{lem:cover2}. With $N_{\rm core}=\binom{3r}{2}+1$, define
\[
S_{\rm hi}:=\{v\in S_0:d_A(v)\ge N_{\rm core}\},\qquad
R_0:=S_0\setminus S_{\rm hi},\qquad
U:=A\setminus S_{\rm hi},\qquad \sigma:=|S_{\rm hi}|.
\]
Finally, let
\[
C:=\{v\in A:d_A(v)>2r\},\qquad L:=A\setminus C,
\qquad c:=|C|,\qquad q:=c-2r,
\]
and normalize the Perron vector by $y_v:=x_v/x_{u^*}$. For all sufficiently large $m$ the following hold.
\begin{enumerate}[label=\textup{(\roman*)}]
\item $\sigma\ge2r$ and $2r\le c\le4r$.
\item The exact defect inequality
\begin{align}
&\left[\binom c2-e(C)\right]+e(L)+e(B)
+\sum_{v\in C}(d_A(v)-2r)(1-y_v)\notag\\
&\qquad
+\sum_{v\in L}(2r-d_A(v))y_v
+\sum_{w\in B}d_A(w)(1-y_w)
\ \le\ \binom q2                                                   \label{eq:defect}
\end{align}
holds; every term on its left is nonnegative.
\item If $q=0$, then
\[
G[C]\cong K_{2r},\qquad e(L)=e(B)=0,
\qquad d_A(v)=2r\quad(v\in L),
\]
$L$ is complete to $C$, $y_v=1$ for every $v\in C$, and $y_w=1$ for every $w\in B$ with $d_A(w)>0$.
\item If $q\ge1$, then
\begin{equation}
\binom q2\ge2r-1.                                                   \label{eq:qgap}
\end{equation}
In particular, since $r\ge2$, neither $q=1$ nor $q=2$ can occur.
\end{enumerate}
\end{lemma}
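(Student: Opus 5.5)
The plan is to obtain (ii) as an exact algebraic rewriting of the master inequality \eqref{eq:master}, and then to get (i), (iii) and (iv) from it together with the covering structure of Lemma~\ref{lem:cover2}.

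\textbf{Step 1: the defect identity (ii).} Split $A=C\cup L$. In \eqref{eq:master} the two sums over $A^+$ and $A^0$ combine into $\sum_{v\in A}(d_A(v)-2r)y_v$, because $d_A=0$ on $A^0$. For $v\in C$ I write $(d_A(v)-2r)y_v=(d_A(v)-2r)-(d_A(v)-2r)(1-y_v)$, and for $v\in L$ I write $(d_A(v)-2r)y_v=-(2r-d_A(v))y_v$. Then I use
\[
\sum_{v\in C}d_A(v)=2e(C)+e(C,L),\qquad e(A)=e(C)+e(C,L)+e(L).
\]
The $e(C,L)$ terms cancel, and \eqref{eq:master} becomes
\[
e(B)+e(L)+(\text{three Perron defects})\le e(C)-2rc+r(2r+1).
\]
Subtracting $\binom c2$ from both sides, the identity $\binom c2-2rc+r(2r+1)=\binom{c-2r}2=\binom q2$ turns this into \eqref{eq:defect}. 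Nonnegativity of each term comes from $0<y_v\le1$, from $d_A>2r$ on $C$, and from $d_A\le 2r$ on $L$.

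\textbf{Step 2: $\sigma\ge2r$, hence $c\ge2r$.} This must be done before using $\binom q2$, since $\binom q2$ is also positive for negative $q$.

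1. Suppose $\sigma\le2r-1$ and go back to \eqref{eq:master}.
2. Every vertex of $A_{\rm out}$ has all its $A$-neighbours in $S_0$. Also $e(R_0,A)\le|R_0|N_{\rm core}=O_k(1)$.
3. Hence for $v\in U$ we have $d_A(v)\le\sigma+(\text{neighbours in }R_0)$. This gives
\[
\sum_{v\in U}(d_A(v)-2r)y_v\le-\sum_{v\in U}y_v+O_k(1).
\]
4. The $S_{\rm hi}$ contribution satisfies $\sum_{S_{\rm hi}}(d_A-2r)y_v-e(A)\le O_k(1)$, because $e(A)\ge\sum_{S_{\rm hi}}d_A-\binom\sigma2$.
5. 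The eigenequation at $u^*$ gives $\sum_{v\in A}y_v=\lambda$, so $\sum_{U}y_v\ge\lambda-\sigma\to\infty$.
6. The right side of \eqref{eq:master} then tends to $-\infty$, contradicting $e(B)\ge0$.

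Since $N_{\rm core}>2r$, we have $S_{\rm hi}\subseteq C$, so $c\ge\sigma\ge2r$.

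\textbf{Step 3: $c\le4r$.} This is the step I expect to be the main obstacle. The defect inequality by itself is too weak, since its right side $\binom q2$ grows with $c$.

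1. Vertices of $C\setminus S_0$ lie in the independent set $A_{\rm out}$, so they contribute at least $\binom{|C\setminus S_0|}2$ missing pairs.
2. They also have $A$-degree at most $3r$.
3. Comparing the quadratic count of missing pairs with $\binom q2$ needs sharper input. I would try to show that low-degree members of $C$ force large defect terms $(d_A(v)-2r)(1-y_v)$, using $y_v\le d(v)/\lambda$.
4. Alternatively, if $C$ has more than $4r$ vertices, I would build $k$ disjoint triangles directly. I would use $u^*$ together with the high-degree hubs in $S_{\rm hi}$, which have linearly many common neighbours in $A_{\rm out}$, and apply Lemma~\ref{lem:hub}.

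I would try the Lemma~\ref{lem:hub} construction first.

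\textbf{Step 4: (iii) and (iv).}

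For (iii): if $q=0$, the right side of \eqref{eq:defect} is $0$, so every nonnegative term vanishes. This gives, in order:
1. $G[C]\cong K_{2r}$.
2. $e(L)=e(B)=0$.
3. $d_A=2r$ on $L$; since $e(L)=0$, all these neighbours lie in $C$, so $L$ is complete to $C$.
4. $y_v=1$ on $C$, because $d_A(v)>2r$ there.
5. $y_w=1$ whenever $w\in B$ has $d_A(w)>0$.

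For (iv): suppose $q\ge1$ but $\binom c2-e(C)\le2r-2$.
1. Choose $2k=2r+2$ vertices of $C$, using $c\ge2r+1$ together with $u^*$ as needed.
2. By Proposition~\ref{lem:matching}, the induced graph on them misses fewer than $2k-1$ edges and hence has a perfect matching.
3. Complete one matched pair by $u^*$. Complete the remaining pairs by distinct common neighbours in $A_{\rm out}$, which are available because most of these vertices lie in $S_{\rm hi}$.
4. This gives $kK_3$, a contradiction.

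Hence the missing-edge term is at least $2r-1$, and \eqref{eq:defect} yields \eqref{eq:qgap}. Since $r\ge2$, $\binom q2\ge2r-1\ge3$, which rules out $q=1,2$.
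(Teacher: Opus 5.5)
Steps 1, 2 and part (iii) are fine. Step 2 is a valid variant of the paper's argument: the paper subtracts $\sigma\lambda$ from the double eigenequation, while you work directly in \eqref{eq:master}. One small fix there: $d_A(v)\le\sigma+d_{R_0}(v)$ holds only for $v\in A_{\rm out}$. For $v\in R_0$ use $d_A(v)<N_{\rm core}$, or write $d_A(v)\le\sigma+d_U(v)$ and note $\sum_{v\in U}d_U(v)=2e(U)=O_k(1)$.

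\textbf{First gap: Step 3.} You never prove $c\le4r$, and neither suggested route works as stated. Counting missing pairs gives only $\binom{c-3r}{2}$ against the allowance $\binom{c-2r}{2}$. Lemma~\ref{lem:hub} needs a clique $S$ and hubs with a large common neighbourhood, and none of that exists at this stage. Membership in $S_{\rm hi}$ only means $d_A\ge N_{\rm core}$, which is a constant.

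The missing idea is a pigeonhole-and-swap argument. If $a\in C\setminus T$, then $a\in A_{\rm out}$, so $N_A(a)\subseteq S_0\subseteq T$. Since $d_A(a)\ge 2r+1$ and $T$ is a union of $r$ triples, $a$ is complete to some $V(T_i)$, and these three vertices lie in $A$. Suppose distinct $a,b\in C\setminus T$ were both complete to $V(T_i)=\{v_1,v_2,v_3\}$. Then $\{a,v_1,v_2\}$, $\{u^*,b,v_3\}$ and the $T_j$ with $j\ne i$ would be $k$ disjoint triangles. Hence $|C\setminus T|\le r$ and $c\le 3r+r=4r$.

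\textbf{Second gap: part (iv).} You complete the matched pairs by distinct common neighbours in $A_{\rm out}$, ``available because most of these vertices lie in $S_{\rm hi}$''. This is unsupported and false in general. A large $A$-degree says nothing about common neighbours of two prescribed vertices. Moreover, a vertex of $C\setminus T$ lies in the independent set $A_{\rm out}$, so it has no neighbour there at all. Your use of $u^*$ is also inconsistent: if $u^*$ is one of the $2r+2$ matched vertices, it cannot also be the apex completing a pair.

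The paper reverses the order: it first produces the common neighbours and only then chooses the vertex set.
\begin{itemize}
\item The fifth term of \eqref{eq:defect} and $e(L)\le\binom q2$, together with $\sum_{a\in A}y_a=\lambda$ and $\sum_{v\in C}y_v\le 4r$ (which already needs Step 3), show that unboundedly many vertices of $L$ have $d_A=2r$ and $d_L=0$. Their $A$-neighbourhoods are therefore $2r$-subsets of $C$.
\item Pigeonhole gives $z_1,\dots,z_r$ with a common $A$-neighbourhood $W$, $|W|=2r$.
\item A perfect matching $a_ib_i$ of $G[W]$ would give the triangles $\{z_i,a_i,b_i\}$. Take $c_0\in C\setminus W$, which exists since $q\ge1$, and $z\in N_A(c_0)\setminus W$, which exists since $d_A(c_0)>2r$. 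Also $z\ne z_i$ because $N_A(z_i)=W$. Then $\{u^*,c_0,z\}$ is a $k$-th disjoint triangle, a contradiction.
\end{itemize}
So Proposition~\ref{lem:matching} with $\ell=r$ shows that $G[W]$, and hence $G[C]$, misses at least $2r-1$ edges, and \eqref{eq:defect} then yields \eqref{eq:qgap}.
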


\begin{proof}
The purpose of this lemma is to turn the spectral hypothesis into the
exact nonnegative defect inequality \eqref{eq:defect}, which will control
all subsequent structural errors.
Put $E_U:=e(G[U])$. Every edge of $G[A]$ meets
$S_0=S_{\rm hi}\cup R_0$, so every edge of $G[U]$ meets $R_0$.
Consequently
\begin{equation}
E_U\le\sum_{v\in R_0}d_A(v)\le |R_0|(N_{\rm core}-1)
\le3r(N_{\rm core}-1)<3rN_{\rm core}=:C_0.
\end{equation}
The eigenequation at $u^*$ gives $\sum_{a\in A}y_a=\lambda$. Applying it twice at $u^*$ and subtracting $\sigma\lambda$ gives
\begin{equation}
\lambda^2-\sigma\lambda
=|A|+\sum_{a\in A}(d_A(a)-\sigma)y_a+\sum_{w\in B}d_A(w)y_w.     \label{eq:sigmaidentity}
\end{equation}
Because $N_{\rm core}>3r\ge\sigma$, the coefficient
$d_A(v)-\sigma$ is positive for $v\in S_{\rm hi}$, and hence
\[
\sum_{v\in S_{\rm hi}}(d_A(v)-\sigma)y_v
\le\sum_{v\in S_{\rm hi}}(d_A(v)-\sigma).
\]
For $a\in U$, the inequality $d_A(a)\le\sigma+d_U(a)$ gives
$(d_A(a)-\sigma)y_a\le d_U(a)$. Therefore
\begin{align*}
\sum_{a\in A}(d_A(a)-\sigma)y_a
&\le2e(S_{\rm hi})+e(S_{\rm hi},U)-\sigma^2+2E_U\\
&=e(A)+e(S_{\rm hi})-\sigma^2+E_U\le e(A)+C_0.
\end{align*}
Also $\sum_{w\in B}d_A(w)y_w\le e(A,B)$. Using
$m=|A|+e(A)+e(A,B)+e(B)$ in \eqref{eq:sigmaidentity}, we obtain
\begin{equation}
\lambda^2-\sigma\lambda\le m+C_0.                                 \label{eq:sigmaupper}
\end{equation}
On the other hand, hypothesis \eqref{eq:star} gives
\begin{equation}
\lambda^2-2r\lambda\ge m-r(2r+1).                                 \label{eq:quadraticlower}
\end{equation}
If $\sigma\le2r-1$, then \eqref{eq:quadraticlower} yields
\[
\lambda^2-\sigma\lambda
\ge m-r(2r+1)+(2r-\sigma)\lambda>m+C_0
\]
for sufficiently large $m$, contradicting \eqref{eq:sigmaupper}. Thus
$\sigma\ge2r$. Since $N_{\rm core}>2r$, we have
$S_{\rm hi}\subseteq C$, so $c\ge2r$.

We next prove the upper bound on $c$. At most $3r$ vertices of $C$ lie in
$T:=\bigcup_{i=1}^rV(T_i)$. If $a\in C\setminus T$, then every
$A$-neighbor of $a$ lies in $S_0=A\cap T$. Since $d_A(a)>2r$, the
pigeonhole principle shows that $a$ is adjacent to all three vertices of
some $T_i$. For each fixed $i$ there is at most one such vertex $a$:
if distinct $a,b\in C\setminus T$ both see
$V(T_i)=\{v_1,v_2,v_3\}$, then
\[
\{a,v_1,v_2\},\quad\{u^*,b,v_3\},\quad T_j\ (j\ne i)
\]
are $r+1=k$ pairwise vertex-disjoint triangles, a contradiction. Hence
$|C\setminus T|\le r$ and $c\le4r$, proving (i).

We now rewrite the master inequality \eqref{eq:master}. Its two sums over
$A^+$ and $A^{0}$ combine as
\[
\sum_{a\in A}(d_A(a)-2r)y_a.
\]
For $v\in C$ use
\[
(d_A(v)-2r)y_v=(d_A(v)-2r)-(d_A(v)-2r)(1-y_v),
\]
whereas for $v\in L$ use
$(d_A(v)-2r)y_v=-(2r-d_A(v))y_v$. Moreover,
\begin{equation}
\sum_{v\in C}(d_A(v)-2r)-e(A)=e(C)-e(L)-2rc.                       \label{eq:degreecore}
\end{equation}
Indeed, $\sum_{v\in C}d_A(v)=2e(C)+e(C,L)$ and
$e(A)=e(C)+e(C,L)+e(L)$. Finally, since $c=2r+q$,
\begin{equation}
e(C)-2rc+r(2r+1)
=\binom q2-\left[\binom c2-e(C)\right].                            \label{eq:binomialidentity}
\end{equation}
Substitution of \eqref{eq:degreecore} and \eqref{eq:binomialidentity}
into \eqref{eq:master} gives \eqref{eq:defect} exactly. Its summands are
nonnegative because $y_v\le1$, proving (ii).

If $q=0$, the right side of \eqref{eq:defect} is zero. Since all six
terms on its left are nonnegative, each of them must vanish. The first
three terms give
\[
\binom{|C|}{2}-e(C)=0,\qquad e(L)=0,
\qquad e(B)=0.
\]
As $|C|=2r$, the first equality says that $G[C]\cong K_{2r}$. The fifth
term is
\[
\sum_{v\in L}(2r-d_A(v))y_v=0.
\]
For $v\in L$ we have $d_A(v)\le2r$ by the definition of $L$, while
$y_v>0$ by positivity of the Perron vector. Hence every individual term
is zero only if $d_A(v)=2r$. Since $e(L)=0$ and $|C|=2r$, all these
$2r$ neighbors lie in $C$, so every vertex of $L$ is adjacent to every
vertex of $C$.

It remains to justify the two Perron-coordinate conclusions. The fourth
term in \eqref{eq:defect} is
\[
\sum_{v\in C}(d_A(v)-2r)(1-y_v)=0.
\]
For every $v\in C$, the definition of $C$ gives $d_A(v)-2r>0$.
Moreover, $0<y_v\le1$ because $y_v=x_v/x_{u^*}$ and $u^*$ was chosen
with maximum Perron coordinate. Thus both factors are nonnegative, and
the first factor is strictly positive; consequently $1-y_v=0$, or
$y_v=1$, for every $v\in C$.

Similarly, the sixth term is
\[
\sum_{w\in B}d_A(w)(1-y_w)=0.
\]
If $w\in B$ satisfies $d_A(w)>0$, then the first factor is strictly
positive and $1-y_w\ge0$. Therefore the corresponding summand can vanish
only when $y_w=1$. This proves all the assertions in (iii).

It remains to prove (iv). Suppose $q\ge1$. Inequality
\eqref{eq:defect} gives
\begin{equation}
\sum_{\substack{v\in L\\d_A(v)<2r}}y_v\le\binom q2,
\qquad e(L)\le\binom q2.                                           \label{eq:Lmass}
\end{equation}
Since $c\le4r$, we have $q\le2r$. Also
$\sum_{a\in A}y_a=\lambda\to\infty$, while
$\sum_{v\in C}y_v\le4r$ and the first sum in \eqref{eq:Lmass} is
bounded. Hence the number of vertices $v\in L$ with $d_A(v)=2r$ tends
to infinity with $m$. Delete from this set the endpoints of the at most
$\binom q2$ edges of $G[L]$. Every remaining vertex has no neighbor in
$L$, so its $A$-neighborhood is a $2r$-subset of $C$. For large $m$,
the remaining set contains
$r$ distinct vertices $z_1,\ldots,z_r$ having a common $A$-neighborhood
$W\subseteq C$ of size $2r$; this follows by the pigeonhole principle,
as there are at most $\binom{c}{2r}\le\binom{4r}{2r}$ possible neighborhoods.

The graph $G[W]$ has no perfect matching. Otherwise, let
$a_1b_1,\ldots,a_rb_r$ be a perfect matching of $G[W]$. Then
$\{z_i,a_i,b_i\}$, $1\le i\le r$, are pairwise disjoint triangles.
Choose $c_0\in C\setminus W$, which is possible because $q\ge1$.
As $d_A(c_0)>2r=|W|$, there is
$z\in N_A(c_0)\setminus W$. None of the $z_i$ can equal $z$, because
$N_A(z_i)=W$. Thus $\{u^*,c_0,z\}$ is disjoint from the preceding $r$
triangles, producing $k=r+1$ disjoint triangles, a contradiction.

By Lemma~\ref{lem:matching}, at least $2r-1$ edges are missing from
$G[W]$. These are also missing from $G[C]$, so
\[
2r-1\le\binom c2-e(C)\le\binom q2
\]
by \eqref{eq:defect}. This is \eqref{eq:qgap} and completes the proof.
\end{proof}

\begin{lemma}[Bounded outer layer]\label{lem:Bbounded}
In the setting of Lemma~\ref{lem:density}, suppose $q\ge1$ and put
\[
D:=\binom q2.
\]
For all sufficiently large $m$,
\[
|B|\le 8D+r-1.
\]
\end{lemma}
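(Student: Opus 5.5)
We want to bound $|B|$ using the defect inequality \eqref{eq:defect}, which gives
\[
e(B)\le D \qquad\text{and}\qquad \sum_{w\in B}d_A(w)(1-y_w)\le D .
\]
We split $B$ into three parts:
\[
B_0:=\{w\in B:d_A(w)=0\},\qquad B_1:=\{w\in B:d_A(w)\ge1,\ y_w\le 1/2\},\qquad B_2:=\{w\in B:d_A(w)\ge 1,\ y_w>1/2\}.
\]
Each $w\in B_1$ contributes at least $1/2$ to the sixth defect term, so $|B_1|\le 2D$. Each $w\in B_0$ has all its neighbors in $B$ and is not isolated, so it is an endpoint of an edge of $G[B]$. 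Hence $|B_0|\le 2e(B)\le 2D$. The real work is to bound $B_2$, the vertices outside $N[u^*]$ carrying Perron mass comparable to $x_{u^*}$. The target $8D+r-1$ leaves room for roughly $4D$ more vertices from this part, plus at most $r-1$ exceptional ones.

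\textbf{Missing-mass step.} For $w\in B_2$ the eigenequation gives
\[
\lambda/2<\lambda y_w=\sum_{v\in N(w)}y_v\le d_B(w)+\sum_{v\in N_A(w)}y_v .
\]
The contribution of $N_B(w)$ is small on average, since $\sum_w d_B(w)=2e(B)\le 2D$. The bulk of the mass $\sum_{a\in A}y_a=\lambda$ sits in $C\cup L_{2r}$, where $L_{2r}:=\{v\in L:d_A(v)=2r\}$; by \eqref{eq:Lmass}, $L\setminus L_{2r}$ carries mass at most $D$, and $C$ carries mass at most $4r$. Thus a vertex $w$ with $y_w>1/2$ and $d_B(w)$ bounded must be adjacent to a linear number (about $\lambda/2-O_k(1)$) of vertices of $L_{2r}$.

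After discarding the at most $2D$ vertices of $B_2$ incident to edges of $G[B]$, consider the remaining vertices $w$. Each has all neighbors in $A$, is adjacent to $\Omega(\lambda)$ vertices of $L_{2r}$, and these vertices have $A$-neighborhood a $2r$-subset of $C$. Removing the $O(D)$ endpoints of edges in $G[L]$, pigeonhole over the at most $\binom{4r}{2r}$ types gives that each such $w$ is adjacent to $\Omega(\lambda)$ vertices of $L$ sharing a common $A$-neighborhood $W_w\subseteq C$ with $|W_w|=2r$.

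\textbf{Triangle construction.} Suppose there were $r$ such vertices $w_1,\dots,w_r$, which we may assume have a common type $W$ after passing to a subfamily; otherwise we use a variant with hubs from different types. Each $w_i$ acts as a hub joined to many vertices of the clique-like set $W$ via $A$-vertices. Use triangles $\{w_i,z_i,a_i\}$ with $z_i\in L$ adjacent to $w_i$ and $a_i\in W$, together with $\{u^*,c_0,z\}$ as in the proof of Lemma~\ref{lem:density}(iv). The defect $\binom{c}{2}-e(C)\le D$ gives $G[W]$ many edges, so the remaining vertices of $W$ can be paired through further $L$-vertices. In the spirit of Lemma~\ref{lem:hub}, this produces $k$ disjoint triangles, a contradiction. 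This caps the number of genuinely heavy vertices outside $N[u^*]$ at $r-1$. Adding the three parts gives a bound of the form
\[
|B|\le 2D+2D+4D+(r-1)=8D+r-1 .
\]

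\textbf{Main obstacle.} The hardest step is the triangle construction. Hubs in $B$ are not adjacent to $C$ directly, only to $L$-vertices, so Lemma~\ref{lem:hub} cannot be applied verbatim. I would first try forming triangles $\{w_i,z_i,z_i'\}$ only when $G[L]$ edges exist, and otherwise route each triangle through $C$ as $\{w,z,c\}$ with $z\in L$ and $c\in N_C(w)$. This uses that $w\in B_2$ must also see $C$, and I would verify this by comparing the mass estimate with the bound $\sum_{L} y\le\lambda$. The careful bookkeeping of which vertex sets are consumed, together with the missing-edge count $\binom{c}{2}-e(C)\le D$, is where I expect the difficulty to lie.
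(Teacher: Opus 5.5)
\textbf{There is a genuine gap in the bound for the heavy part $B_2$.} Your bookkeeping for the light parts is fine. $B_0$ and the vertices you discard from $B_2$ all lie among the at most $2D$ endpoints of edges of $G[B]$, and $|B_1|\le 2D$ by the sixth defect term. But the route you sketch for $B_2$ does not work. The only property of $w\in B_2$ you use is $y_w>1/2$, i.e.\ that $w$ sees at least half of the mass $\sum_{a\in A}y_a=\lambda$. This does not force $w$ to have any neighbour in $C$. Let $L^\circ$ denote the vertices of $L_{2r}$ with no neighbour in $L$. Nothing in your argument excludes $N(w)\subseteq L^\circ$ with mass $0.6\lambda$, and then $w$ lies in no triangle at all, since vertices of $L^\circ$ are pairwise nonadjacent. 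Comparing with $\sum_{v\in L}y_v\le\lambda$ gives no information here.

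Likewise, ``passing to a subfamily with a common type'' is unavailable when you only have $r$ hubs. Finally, even correct per-vertex information cannot make every hub adjacent to the core, since a single hub may miss some vertices of $W$ within the defect budget. So exceptional hubs must be counted separately. That count is the $4D$ in $8D+r-1$, which appears in your final sum without any derivation.

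\textbf{The missing idea is to apply the sixth defect term to $B_2$ itself and \emph{sum} it.} Take $b\in B_2$ with no neighbour in $B$. The eigenequations at $u^*$ and $b$ give $M_b:=\sum_{a\in A\setminus N_A(b)}y_a=\lambda(1-y_b)<\lambda/2$ and $d_A(b)\ge\lambda-M_b>\lambda/2$. Hence $d_A(b)(1-y_b)=d_A(b)M_b/\lambda\ge M_b/2$, and summing gives $\sum_b M_b\le 2D$: all heavy hubs together miss only a bounded amount of Perron mass. Two consequences follow.
\begin{enumerate}[label=\textup{(\alph*)}]
\item Since $L^\circ$ has mass at least $\lambda-4r-3D$, the set $\mathcal Z$ of vertices of $L^\circ$ adjacent to \emph{every} heavy hub has mass at least $\lambda-4r-5D$. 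So a single type $W$ with $|Z_W|\to\infty$ serves all hubs at once.
\item For $w\in W$ we have $d_A(w)\ge|Z_W|$, so the fourth defect term gives $y_w\ge 1/2$ on $W$. Each non-adjacency between a hub and $W$ then costs at least $1/2$ of missing mass, so there are at most $4D$ of them. Hence all but at most $4D$ heavy hubs are complete to $W\cup Z_W$.
\end{enumerate}

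\textbf{With this in hand, your closing configuration does work.} Take $r$ such hubs with distinct $a_i\in W$ and distinct $z_i\in Z_W$, plus $\{u^*,c_0,z_0\}$ with $z_0\in N_A(c_0)\setminus W$ (so $z_0\notin Z_W$). These give $k$ disjoint triangles. No pairing of leftover $W$-vertices is needed. That is fortunate, because your claim that $\binom c2-e(C)\le D$ lets you pair them is unjustified: $G[W]$ has no perfect matching. The paper closes instead via $\alpha=\nu(G[W])$ and $\beta=r-\alpha\ge1$, using $u^*$ as an extra hub to get at most $\beta-1\le r-1$ good hubs. Either ending gives $|B|\le 2D+2D+(4D+r-1)$.
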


\begin{proof}
The purpose of this lemma is to convert the weighted information in the defect inequality into a uniform bound on the number of vertices in the outer set $B$.
By \eqref{eq:defect},
\begin{equation}
e(B)\le D,\qquad
\sum_{b\in B}d_A(b)(1-y_b)\le D.                                  \label{eq:Bbudget}
\end{equation}
Let $B^\times$ be the set of endpoints of the edges in $G[B]$, and
put $B_0:=B\setminus B^\times$. Thus
\[
|B^\times|\le2D,\qquad N_B(b)=\varnothing\quad(b\in B_0).
\]
For $b\in B_0$, define its missing $A$-mass by
\[
M_b:=\sum_{a\in A\setminus N_A(b)}y_a.
\]
The eigenequations at $u^*$ and $b$ give
\[
M_b=\lambda(1-y_b).
\]
Consequently, if $\delta_b:=d_A(b)(1-y_b)$, then
\begin{equation}
\delta_b=\frac{d_A(b)M_b}{\lambda},
\qquad
d_A(b)\ge\sum_{a\in N_A(b)}y_a=\lambda-M_b.                        \label{eq:Mb}
\end{equation}
Split
\[
\mathcal H:=\{b\in B_0:M_b<\lambda/2\},\qquad
\mathcal J:=B_0\setminus\mathcal H.
\]
For $b\in\mathcal H$, \eqref{eq:Mb} gives
$\delta_b\ge M_b/2$, whereas for $b\in\mathcal J$ it gives
$\delta_b\ge d_A(b)/2$. Hence \eqref{eq:Bbudget} yields
\begin{equation}
\sum_{b\in\mathcal H}M_b\le2D,\qquad
\sum_{b\in\mathcal J}d_A(b)\le2D.                                 \label{eq:HJbounds}
\end{equation}
Every $b\in B_0$ has $d_A(b)\ge1$, because $G$ has no isolated
vertices and $B_0$ has no neighbor in $B$. Therefore
\begin{equation}
|\mathcal J|\le2D.                                                  \label{eq:Jcard}
\end{equation}

We next bound $\mathcal H$. Let
\[
L_1:=\{z\in L:d_A(z)=2r\},
\]
and let $Q_L$ be the set of endpoints of the edges in $G[L]$. From
\eqref{eq:defect},
\[
\sum_{\substack{z\in L\\d_A(z)<2r}}y_z\le D,\qquad
e(L)\le D,
\]
so $|Q_L|\le2D$. Since $\sum_{a\in A}y_a=\lambda$, $c\le4r$, and
$y_a\le1$, the set
\[
L^\circ:=L_1\setminus Q_L
\]
satisfies
\begin{equation}
\sum_{z\in L^\circ}y_z\ge\lambda-4r-3D.                            \label{eq:Lcirc-mass}
\end{equation}
Let $\mathcal Z$ be the set of vertices in $L^\circ$ adjacent to every
vertex of $\mathcal H$. By the union bound and the definition of
$M_b$,
\[
\sum_{z\in L^\circ\setminus\mathcal Z}y_z
\le\sum_{b\in\mathcal H}M_b\le2D.
\]
Thus
\begin{equation}
\sum_{z\in\mathcal Z}y_z\ge\lambda-4r-5D.                          \label{eq:Zmass}
\end{equation}
Every $z\in\mathcal Z$ has no neighbor in $L$ and has exactly $2r$
neighbors in $A$, all of which lie in $C$. Since $c\le4r$, there are
at most $\binom{4r}{2r}$ possible $A$-neighborhoods. It follows from
\eqref{eq:Zmass} that, for some $W\subseteq C$ with $|W|=2r$, the set
\[
Z_W:=\{z\in\mathcal Z:N_A(z)=W\}
\]
has cardinality tending to infinity with $m$.

The graph $G[W]$ has no perfect matching. Indeed, suppose
$a_1b_1,\ldots,a_rb_r$ is a perfect matching of $G[W]$. Choose
distinct $z_1,\ldots,z_r\in Z_W$; then
$\{a_i,b_i,z_i\}$, $1\le i\le r$, are disjoint triangles. Since
$q\ge1$, choose $c_0\in C\setminus W$. The inequality
$d_A(c_0)>2r=|W|$ gives a vertex
$z_0\in N_A(c_0)\setminus W$. No vertex of $Z_W$ can equal $z_0$,
because its $A$-neighborhood is exactly $W$. Hence
$\{u^*,c_0,z_0\}$ is a further triangle disjoint from the preceding
$r$, a contradiction.

For each $w\in W$, we have $d_A(w)\ge|Z_W|$. The fourth summand in
\eqref{eq:defect} therefore gives
\[
1-y_w\le\frac{D}{d_A(w)-2r}=o(1).
\]
In particular, $y_w\ge1/2$ for every $w\in W$ once $m$ is large.
Every missing incidence between $\mathcal H$ and $W$ contributes at
least $1/2$ to the corresponding missing mass $M_b$. By
\eqref{eq:HJbounds}, there are at most $4D$ such incidences. Thus all
but at most $4D$ vertices of $\mathcal H$ are complete to $W$; denote
the set of these vertices by $\mathcal H_0$.

Let $\alpha:=\nu(G[W])$ and $\beta:=r-\alpha$. Since $G[W]$ has no
perfect matching, $\beta\ge1$. Complete a maximum matching of $G[W]$
with $\alpha$ distinct vertices of $Z_W$ to obtain $\alpha$ disjoint
triangles. There remain $2\beta$ unmatched vertices of $W$. Using distinct vertices of
$\{u^*\}\cup\mathcal H_0$, distinct unmatched vertices of $W$, and
fresh distinct vertices of $Z_W$, we can form
\[
\min\{|\mathcal H_0|+1,2\beta\}
\]
further disjoint triangles: every required edge is present because
$u^*$ and every vertex of $\mathcal H_0$ are complete to both $W$
and $Z_W$, while $Z_W$ is complete to $W$. Since $G$ is
$(r+1)K_3$-free,
\[
\alpha+\min\{|\mathcal H_0|+1,2\beta\}\le r=\alpha+\beta.
\]
As $\beta\ge1$, this forces $|\mathcal H_0|\le\beta-1\le r-1$. Therefore
\[
|\mathcal H|\le4D+r-1.
\]
Combining this with $|B^\times|\le2D$ and \eqref{eq:Jcard} gives
\[
|B|\le |B^\times|+|\mathcal H|+|\mathcal J|
\le8D+r-1,
\]
as claimed.
\end{proof}

\begin{lemma}[Finite-core exclusion]\label{lem:finitecore}
In the setting of Lemma~\ref{lem:density}, for all sufficiently large $m$ one has
\[
q=0,\qquad\text{and hence}\qquad |C|=2r.
\]
\end{lemma}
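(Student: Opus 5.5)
Suppose $q\ge1$; the goal is a contradiction for large $m$. I will follow the outline from the introduction. By Lemma~\ref{lem:density}, $c\le4r$ and $q\le 2r$, so $D=\binom q2$ is bounded in terms of $k$. By Lemma~\ref{lem:Bbounded}, $|B|\le 8D+r-1$. By \eqref{eq:defect}, $e(L)\le D$.

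Let $Q_L$ be the set of endpoints of edges of $G[L]$, and form the bounded set
\[
K:=\{u^*\}\cup C\cup B\cup Q_L .
\]
Then $|K|=O_k(1)$. Every vertex $z\in A\setminus K = L\setminus Q_L$ has all its neighbours in $K$: it has none in $L\setminus Q_L$, so its neighbours lie in $\{u^*\}\cup C\cup B$. Hence $V(G)\setminus K$ splits into at most $2^{|K|}$ independent twin classes $I_N$, indexed by the neighbourhood $N\subseteq K$.

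For $z\in I_N$ the eigenequation gives $y_z=\lambda^{-1}\sum_{v\in N}y_v$. Substituting this into the eigenequations on $K$ gives the exact identity
\[
\lambda\, y_K=\Bigl(A(G[K])+\lambda^{-1}\sum_N |I_N|\,\mathbf 1_N\mathbf 1_N^\top\Bigr)y_K .
\]
Normalize $\bar y:=y_K/\|y_K\|$ and pass to a subsequence as $m\to\infty$. Since $\sum|I_N|\approx m/\text{const}$ and $\lambda\sim\sqrt m$, dividing by $\lambda$ shows that
\[
\Bigl(\sum_N \tfrac{|I_N|}{\lambda^2}\mathbf 1_N\mathbf 1_N^\top\Bigr)\bar y\to\bar y ,
\]
while $\sum_N |I_N| |N|$ is at most $m$ up to lower-order terms. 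The key quantitative step is a concentration statement. Comparing $\lambda^2\ge m+2r\sqrt m-O(1)$ with the quadratic form, the vectors $\mathbf 1_N$ contributing linearly many twins must carry essentially all of $\bar y$. The limit vector $\bar y$ is then supported on a set $W$ on which $\bar y$ is constant, namely the neighbourhood type with $|I_W|=\Theta(m)$ that maximizes $(\mathbf 1_N^\top\bar y)^2/|N|$, or rather the gain per edge. Here I expect to use the second-order term. The extra $2r\sqrt m$ in $\lambda^2$ must come from $\bar y^\top A(G[W])\bar y\approx 2r\|\bar y\|^2$, which forces $G[W]$ to be nearly a clique with $|W|-1\ge 2r$.

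Next I identify the dominant type. The type $W$ has linearly many twins, and all of them lie in $L$ (except boundedly many in $B$, which is bounded anyway). Elements of $L^\circ$ have $A$-neighbourhood a $2r$-subset of $C$, together with $u^*$, so $W=\{u^*\}\cup W'$ with $W'\subseteq C$ and $|W'|=2r$. The requirement that $G[W]$ is a $K_{2r+1}$ (up to $o(1)$ defect, and exactly, since the entries are integers and the limit equation is exact) gives $G[W']\cong K_{2r}$. Alternatively, Proposition~\ref{lem:matching} can be applied directly: if $G[W']$ lacked a perfect matching, at least $2r-1$ edges would be missing, and the concentration identity would lose a term of order $\sqrt m$ in $\lambda^2$, contradicting \eqref{eq:star}. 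Either way $G[W']$ has a perfect matching $a_1b_1,\dots,a_rb_r$. Choose distinct twins $z_1,\dots,z_r\in I_W$ to obtain $r$ disjoint triangles $\{z_i,a_i,b_i\}$. Since $q\ge1$, pick $c_0\in C\setminus W'$ and $z_0\in N_A(c_0)\setminus W'$, which exists because $d_A(c_0)>2r$. Then $z_0\notin I_W$, and $\{u^*,c_0,z_0\}$ is a $k$-th disjoint triangle, a contradiction. This is the same closing move as in Lemmas~\ref{lem:density} and \ref{lem:Bbounded}.

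The main obstacle is the concentration step. I must show rigorously that the mass of $\bar y$ cannot be spread over several types, or over $C\setminus W'$ and $B$, in a way that still attains $\lambda^2\ge m+2r\sqrt m-O(1)$. My first attempt would be the following. Write $\lambda^2\|y_K\|^2$ approximately as $\sum_N|I_N|(\mathbf 1_N^\top y_K)^2+\lambda\, y_K^\top A(G[K])y_K$. Use Cauchy--Schwarz, $(\mathbf 1_N^\top y)^2\le|N|\,\|y|_N\|^2$, together with the edge count $m\approx\sum_N|I_N||N|+e(K)$. The first-order terms then cancel. The $\sqrt m$ term is at most $\lambda\,\lambda_{\max}(G[\mathrm{supp}])$, and any non-clique support, or a support of size at most $2r$, loses a constant in that coefficient. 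The point is that the target coefficient $2r$ requires the support to be a clique of order at least $2r+1$ among vertices carrying linear twin classes.
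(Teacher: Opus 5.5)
Your outline has the same architecture as the paper: the bounded set $K=\{u^*\}\cup C\cup B\cup Q_L$, independent twin classes, the eliminated eigen-identity, and the closing packing with $\{u^*,c_0,z_0\}$. However, the step you flag as the main obstacle is the heart of the lemma, and the route you sketch for it would fail.

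\textbf{The concentration step.} Bounding the order-$\lambda$ term by the spectral radius of some ``support'' uses the wrong quantity. Until concentration is proved, the relevant support is all of $K$. Its nearly complete part $\{u^*\}\cup C$ already has spectral radius exceeding $2r$. Even on a single type $P$ with $\nu(G[P])\le r$, the spectral radius can exceed $2r$; for instance $K_r\vee (h-r)K_1$ does so for large $h$.

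What works is to keep the Cauchy--Schwarz slack as an exact nonnegative defect. Let $v$ be the unit restriction of the Perron vector to $K$, and set $\Delta:=\sum_N|I_N|\bigl(|N|-(\mathbf{1}_N^\top v)^2\bigr)\ge0$. Then exactly $\lambda^2=m-e(G[K])-\Delta+\lambda\,v^\top A(G[K])v$. Comparing with \eqref{eq:quadraticlower} gives $\Delta=O_r(\lambda)$. A type $P$ with $|I_P|\ge\eta_r m$ exists by pigeonhole, since $\sum_N|I_N||N|=m-e(G[K])$. Because every summand of $\Delta$ is nonnegative, this type satisfies $|P|-(\mathbf{1}_P^\top v)^2=O_r(\lambda^{-1})$. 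Hence $\|v-\mathbf{1}_P/\sqrt{|P|}\|_2^2=O_r(\lambda^{-1})$ and $v^\top A(G[K])v=2e(G[P])/|P|+O_r(\lambda^{-1/2})$. So the controlling quantity is the average degree of $G[P]$, not a spectral radius.

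\textbf{The dominant type.} It is not $\{u^*\}\cup W'$ in general. Twins in $L\setminus Q_L$ may also have neighbours in $B$, so $P=\{u^*\}\cup W'\cup R_B$, and $|P|$ may exceed $2r+1$. Then neither of your two closing arguments goes through. Average degree at least $2r$ does not force a clique. A missing perfect matching in $G[W']$ need not cost anything either, because $B$-vertices can compensate. For example, take $r=2$, $G[W']\cong K_3\cup K_1$, and two adjacent vertices of $B$ that are complete to $W'$. Then $e(G[P])=16$ and $|P|=7$, so the average degree is $32/7>4$, and spectral counting alone cannot exclude this.

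The missing idea is the packing constraint $\nu(G[P])\le r$: $r+1$ disjoint edges inside $P$, completed by $r+1$ twins of type $P$, would give $k$ disjoint triangles. The Erd\H{o}s--Gallai theorem then yields $e(G[P])/|P|\le r$, with equality only when $|P|=2r+1$ and $G[P]\cong K_{2r+1}$. Otherwise, since there are finitely many possibilities, there is a constant loss $\gamma_r$ in the coefficient of $\lambda$, which contradicts \eqref{eq:star}. Equality forces $R_B=\varnothing$ and $G[W']\cong K_{2r}$. Only then does your closing triangle argument apply.
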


\begin{proof}
The purpose of this lemma is to reduce the graph to a bounded core with independent twin classes and then use that finite description to rule out the case $q>0$. Suppose for a contradiction that $q\ge1$, and continue to write $D=\binom q2$. Lemma~\ref{lem:Bbounded}, together with $q\le2r$, shows that $|B|=O_r(1)$. Let $Q$ be the set of endpoints of the edges in $G[L]$. By \eqref{eq:defect}, $|Q|\le2D=O_r(1)$. Put
\[
K:=\{u^*\}\cup C\cup B\cup Q,\qquad I:=L\setminus Q,\qquad
H_K:=G[K].
\]
The set $K$ has bounded order in terms of $r$, and $I$ is independent. Moreover, every neighbor of a vertex in $I$ belongs to $K$.

Partition $I$ into its finitely many neighborhood types $P_1,\ldots,P_\tau\subseteq K$. Write $n_i$ for the multiplicity of
type $P_i$, and let $\chi_i\in\{0,1\}^{K}$ be its incidence vector. Then
\begin{equation}
m=e(H_K)+\sum_{i=1}^\tau n_i|P_i|.                                \label{eq:type-edges}
\end{equation}
Let $a$ be the restriction to $K$ of a Perron vector of $G$, and let $N$ be the $K$-by-$I$ incidence matrix. Eliminating the coordinates on the independent set $I$ from the two block eigenequations gives
\[
\lambda^2a=\lambda A(H_K)a+Ma,\qquad
M:=NN^\top=\sum_{i=1}^\tau n_i\chi_i\chi_i^\top.
\]
Put $v:=a/\|a\|_2$ and
\[
\Delta:=\operatorname{tr}M-v^\top Mv.
\]
Since $M$ is positive semidefinite, $\Delta\ge0$. Taking the inner product with $v$ and using \eqref{eq:type-edges}, we obtain the exact identity
\begin{equation}
\lambda^2=m-e(H_K)-\Delta+\lambda v^\top A(H_K)v.                  \label{eq:twin-identity}
\end{equation}

We next identify a neighborhood type of linear multiplicity. Define
\[
L^\circ:=\{z\in L:d_A(z)=2r,\ d_L(z)=0\}.
\]
We give the mass estimate explicitly. Since $c=|C|\le4r$ and $0<y_v\le1$ for every vertex,
\[
\sum_{v\in C}y_v\le4r.
\]
Moreover, the fifth term in \eqref{eq:defect} gives
\[
\sum_{v\in L}(2r-d_A(v))y_v\le D.
\]
If $v\in L$ and $d_A(v)<2r$, then the integer $2r-d_A(v)$ is at least one. Consequently
\[
\sum_{\substack{v\in L\\d_A(v)<2r}}y_v\le D.
\]
Using $\sum_{a\in A}y_a=\lambda$ and decomposing $A$ into $C$, the vertices of $L$ with $d_A(v)<2r$, and the vertices of $L$ with
$d_A(v)=2r$, we obtain
\begin{align*}
\sum_{\substack{v\in L\\d_A(v)=2r}}y_v
&=\lambda-\sum_{v\in C}y_v
-\sum_{\substack{v\in L\\d_A(v)<2r}}y_v\\
&\ge\lambda-4r-D.
\end{align*}

Recall that $Q$ is the set of endpoints of the edges in $G[L]$. The second term of \eqref{eq:defect} gives $e(L)\le D$, and hence $|Q|\le2e(L)\le2D$. A vertex of $L$ has $d_L(v)=0$ exactly when it does not belong to $Q$. Thus $L^\circ$ is obtained from $\{v\in L:d_A(v)=2r\}$ by deleting its intersection with $Q$. Since every deleted vertex has Perron ratio at most one,
\begin{align*}
\sum_{z\in L^\circ}y_z
&\ge \sum_{\substack{v\in L\\d_A(v)=2r}}y_v-|Q|\\
&\ge\lambda-4r-D-2D\\
&=\lambda-4r-3D.
\end{align*}
For $z\in L^\circ$, the eigenequation and Lemma~\ref{lem:Bbounded} give
\[
\lambda y_z
=1+\sum_{a\in N_A(z)}y_a+\sum_{b\in N_B(z)}y_b
\le1+2r+|B|=O_r(1).
\]
Together with the preceding mass estimate, this shows that $|L^\circ|=\Omega_r(\lambda^2)=\Omega_r(m)$; here we use \eqref{eq:star}, which gives $\lambda^2=\Omega_r(m)$. Each vertex of $L^\circ$ has a full neighborhood of the form
\[
P=\{u^*\}\cup W\cup R_B,\qquad
W\subseteq C,\quad |W|=2r,\quad R_B\subseteq B.
\]
There are only $O_r(1)$ such types. Hence one of them, denoted by $P$, has multiplicity
\begin{equation}
n_P\ge\eta_rm                                                        \label{eq:linear-type}
\end{equation}
for some constant $\eta_r>0$. Put $h:=|P|$ and
$\chi_P:=\mathbf1_P$.

The lower bound \eqref{eq:quadraticlower} and
\eqref{eq:twin-identity} imply
\[
\Delta
\le\lambda\big(v^\top A(H_K)v-2r\big)+r(2r+1)-e(H_K)
=O_r(\lambda),
\]
because $|K|=O_r(1)$. On the other hand,
\[
\Delta
=\sum_{i=1}^\tau n_i\bigl(|P_i|-(\chi_i^\top v)^2\bigr),
\]
and every summand is nonnegative by Cauchy--Schwarz. From
\eqref{eq:linear-type}, the standard bound $\lambda^2\le2m$, and the
preceding estimate,
\begin{equation}
h-(\chi_P^\top v)^2=O_r(\lambda^{-1}).                             \label{eq:type-concentration}
\end{equation}
Let $w:=\chi_P/\sqrt h$. Both $v$ and $w$ are nonnegative unit vectors.
Equation \eqref{eq:type-concentration} gives
$1-(w^\top v)^2=O_r(\lambda^{-1})$, and therefore
\[
\|v-w\|_2^2=2(1-w^\top v)=O_r(\lambda^{-1}).
\]
Since $H_K$ has bounded order,
\begin{equation}
v^\top A(H_K)v
=w^\top A(H_K)w+O_r(\lambda^{-1/2})
=\frac{2e(H_K[P])}{h}+O_r(\lambda^{-1/2}).                         \label{eq:core-Rayleigh}
\end{equation}

We claim that $\nu(H_K[P])\le r$. Otherwise, $r+1$ independent edges
of $H_K[P]$, completed with $r+1$ distinct vertices of the type $P$,
would give $r+1=k$ pairwise vertex-disjoint triangles in $G$.
Since $h=2r+1+|R_B|\ge2r+1$, the Erd\H{o}s--Gallai matching theorem
gives
\begin{equation}
e(H_K[P])
\le\max\left\{\binom{2r+1}{2},
\binom r2+r(h-r)\right\},
\qquad\text{so}\qquad
\frac{e(H_K[P])}{h}\le r.                                         \label{eq:EG-ratio}
\end{equation}
Equality in the last inequality is possible only when
\begin{equation}
h=2r+1\qquad\text{and}\qquad H_K[P]\cong K_{2r+1}.                \label{eq:EG-equality}
\end{equation}

If equality does not hold in \eqref{eq:EG-ratio}, then, because
$h\le|K|=O_r(1)$ and there are only finitely many possible graphs
$H_K[P]$, there is a constant $\gamma_r>0$ such that
\[
\frac{e(H_K[P])}{h}\le r-\gamma_r.
\]
Equations \eqref{eq:twin-identity} and \eqref{eq:core-Rayleigh}, with
$\Delta\ge0$ and $e(H_K)\ge0$, would then give
\[
\lambda^2
\le m+2(r-\gamma_r)\lambda+O_r(\lambda^{1/2}),
\]
contradicting
\[
\lambda^2\ge m+2r\lambda-r(2r+1)
\]
from \eqref{eq:quadraticlower} when $m$ is sufficiently large.
Thus \eqref{eq:EG-equality} holds. In particular $R_B=\varnothing$ and
$G[W]\cong K_{2r}$.

Choose a perfect matching
$a_1b_1,\ldots,a_rb_r$ of $G[W]$ and distinct vertices
$z_1,\ldots,z_r$ of the type $P$. These give the $r$ disjoint
triangles $\{a_i,b_i,z_i\}$. Since $q\ge1$, take
$c_0\in C\setminus W$. As $d_A(c_0)>2r=|W|$, there is
$z_0\in N_A(c_0)\setminus W$. The vertex $z_0$ is not one of the
$z_i$, because every $z_i$ has $A$-neighborhood exactly $W$.
Therefore $\{u^*,c_0,z_0\}$ is a further disjoint triangle, yielding
$r+1=k$ disjoint triangles in $G$, a contradiction. Hence $q=0$.
Lemma~\ref{lem:density}\textup{(i)} now gives $|C|=2r$.
\end{proof}

\begin{lemma}[Zero-defect equality]\label{lem:equality}
If $|C|=2r$, then
\[
\lambda(G)=r+\sqrt{m-r(r+1)},
\]
and the conclusions in Lemma~\ref{lem:density}\textup{(iii)} hold.
\end{lemma}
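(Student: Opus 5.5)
The plan is to get the structural conclusions directly from Lemma~\ref{lem:density}, and then to recompute the double eigenequation exactly, rather than as an inequality, using the vanishing of every defect. Since $|C|=2r$ means $q=0$, Lemma~\ref{lem:density}\textup{(iii)} applies verbatim. It gives
\[
G[C]\cong K_{2r},\qquad e(L)=e(B)=0,\qquad d_A(v)=2r\ (v\in L),
\]
together with $L$ complete to $C$, $y_v=1$ on $C$, and $y_w=1$ for every $w\in B$ with $d_A(w)>0$. So the second assertion of the lemma costs nothing, and the only real content is the exact value of $\lambda(G)$.

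For the spectral radius I will go back to the identity obtained before \eqref{eq:master}, which is an equality and not an inequality:
\[
\bigl(\lambda^2-2r\lambda\bigr)=|A|+\sum_{u\in A}(d_A(u)-2r)y_u+\sum_{w\in B}d_A(w)y_w .
\]
I will evaluate the right-hand side using the zero-defect data, in three parts.
\begin{itemize}
\item On $C$ we have $y_v=1$, so that part of the sum is $\sum_{v\in C}(d_A(v)-2r)$.
\item On $L$ every coefficient $d_A(v)-2r$ vanishes.
\item On $B$, each term is either $d_A(w)=0$ or has $y_w=1$, so the sum equals $e(A,B)$.
\end{itemize}
By \eqref{eq:degreecore} with $c=2r$, $e(C)=\binom{2r}2$ and $e(L)=0$, the $C$-sum equals
\[
e(A)+\binom{2r}{2}-4r^2=e(A)-r(2r+1).
\]
Inserting this and using \eqref{eq:mdecomp} with $e(B)=0$ gives
\[
\lambda^2-2r\lambda=|A|+e(A)+e(A,B)-r(2r+1)=m-r(2r+1).
\]
This identity is exact and does not depend on hypothesis \eqref{eq:star} beyond the use of (iii).

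It remains to solve the quadratic. Its roots are $r\pm\sqrt{r^2+m-r(2r+1)}=r\pm\sqrt{m-r(r+1)}$; the radicand is positive because $m\ge M_0(k)>k(k-1)$. The smaller root is less than $r$. However, $\lambda(G)>r$: the subgraph $G[C\cup\{u^*\}]\cong K_{2r+1}$ already has spectral radius $2r$, so Perron monotonicity gives $\lambda\ge 2r>r$. Hence $\lambda=r+\sqrt{m-r(r+1)}$.

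I expect no serious obstacle. The only point needing care is the bookkeeping in the $B$-sum: vertices with $d_A(w)=0$ contribute nothing whatever $y_w$ is, so only the weaker conclusion $y_w=1$ for $d_A(w)>0$ is needed. One should also check that the identity used is exact, i.e.\ that no inequality such as $y_v\le 1$ was applied in deriving it.
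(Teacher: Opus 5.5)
Your proposal is correct and follows essentially the same route as the paper. The paper likewise takes the structural conclusions from the vanishing of every defect term when $q=0$, and reads off $\lambda^2-2r\lambda=m-r(2r+1)$ from the exact double-eigenequation identity, whose right-hand side it writes as $\binom q2-\mathcal D$; you substitute the zero-defect data term by term, which is the same computation unpacked. The only difference is that you get $\lambda>r$ from the $K_{2r+1}$ on $\{u^*\}\cup C$ instead of from hypothesis \eqref{eq:star}, and either justification works.
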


\begin{proof}
The purpose of this lemma is to show that $q=0$ forces every defect term to vanish, thereby determining both the extremal eigenvalue and the corresponding graph structure. Since $c=|C|=2r$, we have $q=c-2r=0$.  Let $\mathcal D$ denote the entire left-hand side of the defect inequality \eqref{eq:defect}.  Each of its six summands is nonnegative, while Lemma~\ref{lem:density}\textup{(ii)} gives
\[
0\le \mathcal D\le\binom q2=0.
\]
Thus $\mathcal D=0$, and in fact every one of its summands vanishes. We first spell out the resulting structural information.

The first three summands give
\[
\binom{|C|}{2}-e(C)=0,\qquad e(L)=0,\qquad e(B)=0.
\]
Hence $G[C]\cong K_{2r}$, and both $L$ and $B$ are independent.  The
fifth summand gives
\[
\sum_{v\in L}(2r-d_A(v))y_v=0.
\]
For $v\in L$, the definition of $L$ gives $d_A(v)\le2r$, and
$y_v>0$ because the Perron vector is positive.  Therefore
$d_A(v)=2r$ for every $v\in L$.  Since $e(L)=0$ and $|C|=2r$, all
of these neighbors lie in $C$; consequently every vertex of $L$ is
complete to $C$.

The fourth summand is
\[
\sum_{v\in C}(d_A(v)-2r)(1-y_v)=0.
\]
Here $d_A(v)-2r>0$ by the definition of $C$, and $1-y_v\ge0$ because
$u^*$ has maximum Perron coordinate.  It follows that $y_v=1$ for
every $v\in C$.  Similarly, the sixth summand
\[
\sum_{w\in B}d_A(w)(1-y_w)=0
\]
shows that $y_w=1$ whenever $w\in B$ and $d_A(w)>0$.  These are
precisely the conclusions of Lemma~\ref{lem:density}\textup{(iii)}.

It remains to derive the asserted value of $\lambda$.  We record
explicitly the slack in the master inequality.  Divide
\eqref{eq:lambda2} by $x_{u^*}$, use
$\lambda=\sum_{a\in A}y_a$, and subtract the edge decomposition
\eqref{eq:mdecomp}.  This gives the exact identity
\begin{align*}
&\lambda^2-2r\lambda-\bigl(m-r(2r+1)\bigr)\\
&\quad=
\sum_{a\in A}(d_A(a)-2r)y_a-e(A)-e(B)
-\sum_{w\in B}d_A(w)(1-y_w)+r(2r+1).
\end{align*}
Splitting the sum over $A=C\cup L$ and using
\eqref{eq:degreecore} and \eqref{eq:binomialidentity}, the right-hand
side becomes
\[
\binom q2-\mathcal D.
\]
Thus the defect calculation retains the full equality information:
\[
\lambda^2-2r\lambda-\bigl(m-r(2r+1)\bigr)
=\binom q2-\mathcal D.
\]
Since $q=0$ and $\mathcal D=0$, we obtain
\[
\lambda^2-2r\lambda=m-r(2r+1).
\]
Equivalently,
\[
(\lambda-r)^2=m-r(r+1).
\]
The hypothesis \eqref{eq:star} implies $\lambda>r$, so the relevant
root is
\[
\lambda=r+\sqrt{m-r(r+1)},
\]
as required.
\end{proof}

Finally we prove that the set $B$ is empty.

\begin{lemma}\label{lem:Bempty}
	Let $G$, $u^*$, $A$, $B$, $C$, and $L$ be as fixed in Section~3. For
	all sufficiently large $m$, if $|C|=2r$ and the zero-defect
	conclusions of Lemma~\ref{lem:density}\textup{(iii)} hold, then
	$B=\varnothing$.
\end{lemma}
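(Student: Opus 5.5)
Since the zero-defect conclusions of Lemma~\ref{lem:density}\textup{(iii)} hold, every vertex of $B$ has its Perron ratio equal to $1$. I will show that such a vertex would act as a second hub over the clique $C$, and then Lemma~\ref{lem:hub} produces $k$ disjoint triangles. Suppose, for a contradiction, that $w\in B$.

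\emph{Step 1: $y_w=1$.} By Lemma~\ref{lem:density}\textup{(iii)}, $e(B)=0$, so $w$ has no neighbour in $B$. It is not adjacent to $u^*$ by definition of $B$. Since $G$ has no isolated vertices, $d_A(w)>0$, and the zero-defect conclusion gives $y_w=1$.

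\emph{Step 2: $w$ is complete to $A$.} The eigenequation at $w$, normalized by $x_{u^*}$, reads
\[
\lambda y_w=\sum_{a\in N_A(w)}y_a .
\]
The eigenequation at $u^*$ gives $\lambda=\sum_{a\in A}y_a$. With $y_w=1$, these combine to
\[
\sum_{a\in A\setminus N_A(w)}y_a=0 .
\]
All $y_a>0$ by Perron--Frobenius, so $N_A(w)=A$. In particular $w$ is adjacent to every vertex of $C$ and every vertex of $L$.

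\emph{Step 3: apply Lemma~\ref{lem:hub} with two hubs.} Take $S=C$, which is a clique of size $\ell=2r$ by Lemma~\ref{lem:density}\textup{(iii)}. Take $A'=L$, and the two hubs $h_1=u^*$, $h_2=w$. Hypothesis (a) holds because $L$ is complete to $C$. Hypothesis (b) holds for $u^*$ because $A=N_G(u^*)$, and for $w$ by Step~2. For the size condition, note that $\sum_{a\in L}y_a=\lambda-\sum_{v\in C}y_v=\lambda-2r$ and each $y_a\le1$. Hence $|L|\ge\lambda-2r\to\infty$, so $|L|\ge 2+\lfloor(2r-2)/2\rfloor=r+1$ once $m$ is large. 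Lemma~\ref{lem:hub} then gives $2+(r-1)=r+1=k$ pairwise disjoint triangles, matching Remark~\ref{rem:threshold} with two hubs and $\ell=2k-2$. This contradicts $kK_3$-freeness, so $B=\varnothing$.

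I expect no real obstacle. The only points needing care are checking $d_A(w)>0$, so that the zero-defect conclusion $y_w=1$ applies, and fixing the threshold on $m$ that guarantees $\lambda-2r\ge r+1$. The latter is absorbed into $M_{\rm zero}(k)$.
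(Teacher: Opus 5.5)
Your proof is correct and follows essentially the same route as the paper. The paper splits into the cases $N_A(w)=A$, ruled out by Lemma~\ref{lem:hub} with hubs $u^*,w$ over $S=C$, $A'=L$ and the bound $|L|\ge\lambda-2r$, and $N_A(w)\subsetneq A$, ruled out by comparing the eigenequations at $w$ and $u^*$ against $y_w=1$; you instead use that same eigenequation comparison first to force $N_A(w)=A$ and then invoke the Hub--Clique Lemma, which removes the case split without changing anything of substance.
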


\begin{proof}
The purpose of this lemma is to eliminate every remaining vertex outside
the closed neighborhood of $u^*$, which completes the desired join
structure.
Suppose, to the contrary, that $B\ne\varnothing$, and fix
$w\in B$.  The zero-defect conclusions give $e(B)=0$.  Hence $w$ has
no neighbor in $B$, and by the definition of $B$ it is not adjacent to
$u^*$.  Therefore
\[
N_G(w)=N_A(w)\subseteq A.
\]
The graph $G$ has no isolated vertices, so $d_A(w)>0$.  The final
Perron-coordinate conclusion of Lemma~\ref{lem:density}\textup{(iii)}
therefore applies and yields
\begin{equation}
y_w=1,\qquad\text{or equivalently}\qquad x_w=x_{u^*}.             \label{eq:Bsame-coordinate}
\end{equation}
We now distinguish whether $w$ is adjacent to all of $A$.

First suppose that $N_A(w)=A$.  We verify all the hypotheses of the
Hub--Clique Lemma explicitly.  By the zero-defect structure,
$G[C]\cong K_{2r}$, the set $L$ is complete to $C$, and
$A=C\cup L$.  The vertex $u^*$ is complete to $A$ by the definition
$A=N_G(u^*)$, while the present assumption says that $w$ is also
complete to $A$.  Thus Lemma~\ref{lem:hub} applies with
\[
S=C,\qquad A'=L,\qquad h_1=u^*,\qquad h_2=w,\qquad
\ell=|C|=2r,\qquad j=2.
\]
It remains only to check that $L$ is large enough.  Since $y_v=1$ for
all $v\in C$ and $0<y_v\le1$ for $v\in L$, the eigenequation at
$u^*$ gives
\[
\lambda=\sum_{a\in A}y_a
=2r+\sum_{v\in L}y_v
\le2r+|L|.
\]
On the other hand, \eqref{eq:star} implies $\lambda\to\infty$ as
$m\to\infty$.  Hence $|L|\to\infty$, and for sufficiently large $m$
we have
\[
|L|\ge 2+\left\lfloor\frac{2r-2}{2}\right\rfloor=r+1.
\]
The Hub--Clique Lemma now produces
\[
2+\left\lfloor\frac{2r-2}{2}\right\rfloor
=r+1=k
\]
pairwise vertex-disjoint triangles.  This contradicts the assumption
that $G$ is $kK_3$-free.  Consequently $N_A(w)\ne A$.

We are left with $N_A(w)\subsetneq A$.  Choose a vertex
$a_0\in A\setminus N_A(w)$.  Because the Perron vector is positive,
$x_{a_0}>0$.  Using $N_G(w)=N_A(w)$ and then the eigenequations at
$w$ and $u^*$, we obtain
\begin{align*}
\lambda x_w
&=\sum_{v\in N_A(w)}x_v\\
&<\sum_{v\in A}x_v
=\lambda x_{u^*}.
\end{align*}
The inequality is strict because the second sum contains the positive
term $x_{a_0}$ that is absent from the first.  This contradicts
\eqref{eq:Bsame-coordinate}, which says that $x_w=x_{u^*}$.

Thus neither possible neighborhood relation $N_A(w)=A$ nor
$N_A(w)\subsetneq A$ can occur.  No vertex $w\in B$ exists, and hence
$B=\varnothing$.
\end{proof}

\section{Proof of the main theorem}

\begin{proof}[Proof of Theorem~\ref{thm:main}]
We argue by induction on $k$. The base case $k=2$, including its
equality characterization, is the theorem of Wang, Jia and
Ni~\cite{Wang2026}. Now fix $k\ge3$ and assume that
Theorem~\ref{thm:main} holds at level $k-1$. When $k=3$, the required
level-$(k-1)$ result is exactly the published base case; when $k\ge4$,
it is the inductive hypothesis. The definition of $M_0(k)$ at the
beginning of Section~3 ensures that every use of the level-$(k-1)$
result and every auxiliary sufficiently-large condition is valid.

Fix $m\ge M_0(k)$ and define the target value
\[
\Lambda_m:=(k-1)+\sqrt{m-k(k-1)}.
\]
Among all $kK_3$-free graphs with $m$ edges, choose a graph $G$ with
maximum spectral radius. Such a maximizer exists: after isolated
vertices are removed, an $m$-edge graph has at most $2m$ vertices, so
there are only finitely many relevant graphs. Remove all isolated
vertices from $G$. This changes neither $m$ nor $\lambda(G)$ and
preserves $kK_3$-freeness. Since every other admissible graph has
spectral radius at most $\lambda(G)$, it is enough to prove the theorem
for this extremal graph.

We first prove the upper bound. If
\[
\lambda(G)<\Lambda_m,
\]
then the desired inequality already holds strictly. We may therefore
assume
\[
\lambda(G)\ge\Lambda_m,
\]
which is precisely hypothesis \eqref{eq:star}. All the reductions from
Section~3 are now available.

Recall that $r=k-1$. Lemma~\ref{lem:finitecore} gives
\[
q=0\qquad\text{and}\qquad |C|=2r.
\]
Applying Lemma~\ref{lem:equality}, we obtain
\begin{align*}
\lambda(G)
&=r+\sqrt{m-r(r+1)}\\
&=(k-1)+\sqrt{m-k(k-1)}
=\Lambda_m.
\end{align*}
Thus the assumed lower bound can never be strict. Moreover, the
zero-defect conclusions in Lemma~\ref{lem:density}\textup{(iii)} give
\[
G[C]\cong K_{2r},\qquad e(L)=e(B)=0,
\]
and every vertex of $L$ has exactly $2r$ neighbors in $A$. Since
$|C|=2r$ and $L$ is independent, those neighbors are precisely the
vertices of $C$. Hence $L$ is complete to $C$. Finally,
Lemma~\ref{lem:Bempty} shows that
\[
B=\varnothing.
\]

We now reconstruct the whole graph. By definition,
$A=N_G(u^*)=C\cup L$, so $u^*$ is adjacent to every vertex of
$C\cup L$. The set $\{u^*\}\cup C$ therefore induces a clique of
size $2r+1$, while $L$ is independent and complete to that clique.
There are no further vertices because $B=\varnothing$ and the isolated
vertices were removed. Consequently,
\[
G\cong K_{2r+1}\vee |L|K_1
=K_{2k-1}\vee |L|K_1.
\]

Counting the clique edges and the edges between the two parts gives
\begin{align*}
m
&=\binom{2r+1}{2}+(2r+1)|L|\\
&=(2r+1)(r+|L|)\\
&=(2k-1)(k-1+|L|).
\end{align*}
It follows that
\[
(2k-1)\mid m
\qquad\text{and}\qquad
|L|=\frac{m}{2k-1}-(k-1).
\]

This also explains the nondivisible case. If $(2k-1)\nmid m$, the
assumption $\lambda(G)\ge\Lambda_m$ would force the divisibility just
derived, which is impossible. Therefore
\[
\lambda(G)<\Lambda_m
\qquad\text{whenever }(2k-1)\nmid m.
\]
If $(2k-1)\mid m$, the argument shows that any extremal graph satisfying
\eqref{eq:star} must have the displayed join structure and must attain
$\Lambda_m$.

It remains to verify that equality is attainable in the divisible
case. Suppose $(2k-1)\mid m$ and put
\[
\ell:=\frac{m}{2k-1}-(k-1).
\]
For sufficiently large $m$, we have $\ell\ge0$. By
Lemma~\ref{lem:target}, the graph
\[
K_{2k-1}\vee \ell K_1
\]
is $kK_3$-free, has exactly $m$ edges, and has spectral radius
$\Lambda_m$. Hence the upper bound is sharp whenever
$(2k-1)\mid m$.

Finally, suppose an arbitrary $m$-edge $kK_3$-free graph attains
$\Lambda_m$. It is then itself an extremal graph, so the preceding
argument applies after its isolated vertices are removed. Its unique
nontrivial component is therefore
\[
K_{2k-1}\vee
\left(\frac{m}{2k-1}-(k-1)\right)K_1.
\]
Reinstating any number of isolated vertices gives exactly the equality
family stated in Theorem~\ref{thm:main}. This completes the induction
and the proof.
\end{proof}

\section{Conclusion}

We have established, for every fixed $k\ge2$ and all sufficiently large
sizes, a sharp universal spectral upper bound for graphs without $k$ disjoint
triangles and characterized every equality case. The equality graph is unique
up to isolated vertices and is a
join of a $(2k-1)$-clique with an independent set. In particular, its
structure differs fundamentally from the balanced bipartite-type
extremizer in the corresponding fixed-order problem.

The principal methodological point is that the fixed-size problem is
governed by a second-order obstruction. The family
$K_{k-1}\vee K_{c,c}$ lies only $\Theta(m^{-1/2})$ below the target, so
an $o(m)$-edge stability statement cannot determine the extremal graph.
The proof instead passes from an exact local defect identity to a bounded
outer layer and then to a finite-core problem with independent twin
classes. The matrix identity \eqref{eq:twin-identity} turns this finite
description into Perron-vector concentration, while matching theory
identifies exactly the core set on which the normalized Perron mass
concentrates.

The theorem leaves the following two problems open.

\begin{problem}[Nondivisible sizes]\label{prob:nondivisible}
Fix $k\ge2$ and a residue
$a\in\{1,\ldots,2k-2\}$. For all sufficiently large integers
$m\equiv a\pmod{2k-1}$, determine the exact value of
\[
\max\bigl\{\lambda(G):e(G)=m,\;G\text{ is }kK_3\text{-free}\bigr\},
\]
and characterize all graphs attaining this maximum.
\end{problem}

Theorem~\ref{thm:main} gives a strict upper bound in these residue
classes, but it does not identify the optimal graph or the precise
gap from that bound.

\begin{problem}[Higher disjoint cliques]\label{prob:higher-cliques}
Fix integers $k\ge2$ and $s\ge4$. For all sufficiently large $m$,
determine the exact value of
\[
\max\bigl\{\lambda(G):e(G)=m,\;G\text{ is }kK_s\text{-free}\bigr\},
\]
and characterize all extremal graphs.
\end{problem}

An approach to Problem~\ref{prob:higher-cliques} may require an analogue
of the defect--outer-layer--finite-core reduction developed here,
together with new local packing tools, since the matching argument used
for triangles does not extend directly to $K_s$ when $s\ge4$.

\section*{Declaration on the use of AI}
The authors used generative AI tools to assist in discussing proof strategies, checking proofs, and improving the exposition. The authors take full responsibility for the mathematical arguments, results, and conclusions, all of which were carefully reviewed and verified by them.

\section*{Declaration of competing interest}

The authors declare that they have no known competing financial interests or personal relationships that could have appeared to influence the work reported in this article.

\section*{Acknowledgments}
The authors acknowledge the fruitful discussions with Hitesh Kumar, which have helped  to prove the main theorem of the article.

\section*{Data availability}

Data sharing is not applicable to this article as no datasets were generated or analyzed during the current study.

\end{document}